\theoremstyle{plain}
\newtheorem{theorem}{Theorem}[section]
\newtheorem{corollary}[theorem]{Corollary}
\newtheorem{lemma}[theorem]{Lemma}
\theoremstyle{definition}
\newtheorem{remark}[theorem]{Remark}
\newtheorem{example}[theorem]{Example}
\numberwithin{equation}{section}
\title[ODEs associated with CMS models]{The differential equations associated with Calogero-Moser-Sutherland particle models 
in the freezing regime}
\author{Michael Voit, Jeannette H.C. Woerner} 
\address{Fakult\"at Mathematik, Technische Universit\"at Dortmund,
          Vogelpothsweg 87,
          D-44221 Dortmund, Germany}
\email{michael.voit@math.tu-dortmund.de, jeannette.woerner@math.tu-dortmund.de}
\begin{document}
\subjclass[2010]{Primary 34A05; Secondary 60F05, 60J60, 60B20, 82C22, 33C45 }
\keywords{Interacting particle systems, Calogero-Moser-Sutherland models, ODE in the freezing limit,
existence andd uniqueness of solutions, properties of solutions.}

\begin{abstract}
Multivariate Bessel processes   describe Calogero-Moser-Sutherland  particle models
 and are related with $\beta$-Hermite and $\beta$-Laguerre ensembles. 
They depend on a root system and a multiplicity $k$.
Recently, several limit theorems for $k\to\infty$ were derived where the limits depend on the solutions
of  associated ODEs in these freezing regimes.
In this paper we study the solutions of these ODEs which are are singular on the boundaries of their domains.
In particular we prove that for a start in arbitrary boundary points, the ODEs always admit unique solutions in their domains for $t>0$.
\end{abstract}

\date{\today}

\maketitle

\section{Introduction}
Calogero-Moser-Suther\-land  particle systems on  $\mathbb R$  with $N$ particles 
can be described as  multivariate Bessel processes on  closed Weyl chambers in  $\mathbb R^N$. These processes 
 are time-homogeneous diffusions with well-known generators of the transition semigroups,
and they are solution of the associated stochastic differential equations (SDEs); see  \cite{A, CGY, DV, R, R2, RV} for the background.
These  Bessel processes $(X_{t,k})_{t\ge0}$ are  classified via root systems,  a possibly multidimensional
multiplicity parameter (often called a coupling constant)  $k$ and by their starting points. Moreover, these processes are 
related to the $\beta$-Hermite and $\beta$-Laguerre ensembles of Dumitriu and Edelman \cite{DE1, DE2}; see e.g. \cite{AV1}.

We briefly recapitulate the generators of  $(X_{t,k})_{t\ge0}$ for the most important cases, the root systems $A_{N-1}$ and $B_N$.
For $A_{N-1}$, we have  $k\in]0,\infty[$, the processes live on the closed Weyl chamber
$$C_N^A:=\{x\in \mathbb R^N: \quad x_1\ge x_2\ge\ldots\ge x_N\},$$
and   the generators of the transition semigroups are
\begin{equation}\label{def-L-A} Lf:= \frac{1}{2} \Delta f +
 k \sum_{i=1}^N\Bigl( \sum_{j\ne i} \frac{1}{x_i-x_j}\Bigr) \frac{\partial}{\partial x_i}f ,
 \end{equation}
where we assume reflecting boundaries.
For $B_N$, we have $k=(k_1,k_2)\in]0,\infty[^2$, the processes live on 
$$C_N^B:=\{x\in \mathbb R^N: \quad x_1\ge x_2\ge\ldots\ge x_N\ge0\},$$
and the generators of the transition semigroups are
\begin{equation}\label{def-L-B} Lf:= \frac{1}{2} \Delta f +
 k_2 \sum_{i=1}^N \sum_{j\ne i} \Bigl( \frac{1}{x_i-x_j}+\frac{1}{x_i+x_j}  \Bigr)
 \frac{\partial}{\partial x_i}f 
\quad + k_1\sum_{i=1}^N\frac{1}{x_i}\frac{\partial}{\partial x_i}f \end{equation}
with  reflecting boundaries.
If we define the  weight functions $w_k$ of the form
\begin{equation}\label{def-wk}
w_k^A(x):= \prod_{i<j}(x_i-x_j)^{2k}, \quad\quad 
w_k^B(x):= \prod_{i<j}(x_i^2-x_j^2)^{2k_2}\cdot \prod_{i=1}^N x_i^{2k_1},\end{equation}
the generators may be written in a unified way as
\begin{equation}\label{def-L-general} Lf:= \frac{1}{2} \Delta f +
\frac{1}{2}  \nabla \ln w_k \cdot \nabla f.
 \end{equation}

The associated SDEs  then have the form
\begin{equation}\label{SDE-general}
dX_{t,k}= dB_t +  \frac{1}{2} \nabla(\ln w_k)(X_{t,k}) \> dt\end{equation}
on the interior of the associated Weyl chamber $C_N$ with an $N$-dimensional Brownian motion $(B_t)_{t\ge0}$.
It is known from  \cite{CGY, Sch, GM} that  for each starting point in the closed Weyl chamber $C_N$, 
 (\ref{SDE-general}) has a unique 
strong solution $(X_{t,k})_{t\ge0}$ whenever all  components of $k$ are positive. Moreover, 
if all components of $k$ are at least $ 1/2$, we have the following behaviour: If the process starts in the
 interior of $C_N$, then  $(X_{t,k})_{t\ge0}$  does not hit the boundary $\partial C_N$ of $C_N$  almost surely. If it starts at the boundary, it leaves the boundary immediately.  

Assume now that $k=\beta>0$ in the A-case and $k=(k_1,k_2)=(\nu\beta,\beta)$ with $\nu>0$ fixed and $\beta>0$
in the B-case. Then in both cases, the renormalized processes $(\tilde X_{t,k}:=\frac{1}{\beta}X_{t,k})_{t\ge0}$  satisfy
$d\tilde X_{t,k}= \frac{1}{\sqrt\beta}dB_t +  \frac{1}{2} \nabla(\ln w)(\tilde X_{t,k}) \> dt$
with $w$ of the form
\begin{equation}\label{def-w}
w^A(x):= \prod_{i<j}(x_i-x_j)^{2}, \quad\quad 
w_k^B(x):= \prod_{i<j}(x_i^2-x_j^2)^{2}\cdot \prod_{i=1}^N x_i^{2\nu}.\end{equation}
In \cite{AKM1, AKM2, AV1, V, VW}, several limit theorems like laws of large numbers and central limit theorems were derived
for $\tilde X_{t,k}$ and $X_{t,k}$ for $\beta\to\infty$, where the limits are described in terms
 of the solutions of the associated ordinary differential equations (ODEs) 
\begin{equation}\label{ODE-general}
\frac{dx(t)}{dt}=  \frac{1}{2} \nabla(\ln w)(x(t)) \end{equation}
in the interior of $C_N$. Unfortunately, only for  particular starting points $x_0$, the gradient systems 
(\ref{ODE-general}) can be solved explicitly. For instance, by \cite{AV1}, one has solutions of the form $x(t)=\sqrt{c+t}\cdot z$ for
 $c>0$ where the coordinates of $z\in\mathbb R^N$  are the ordered zeros of classical orthogonal polynomials of order $N$.
 Formally, these solutions also exist for $c=0$ where we start in the singular point $x(0)=0\in \partial C_N$.

The aim of this paper is to study several properties of the solutions of (\ref{ODE-general}) in a systematic way
for the most interesting root systems of types A,B, and D; see Section 4 for the details the the root systems $D_n$. 
In particular, we  show that all solutions of (\ref{ODE-general}) can be determined explicitely up to determining 
the zeros of polynomial of order $N$ via transformations of  (\ref{ODE-general}) using elementary symmetric polynomials. 
We also  obtain from this approach that for all starting points $x_0\in \partial C_N$ there is a unique continuous 
solution $x$ of (\ref{ODE-general}) with $x(t)$ in the interior of  $C_N$ for $t>0$.
 The arguments will be similar for  root systems of types A,B, and D; these three cases will be studied in the next three sections.

\section{The root system $A_{N-1}$} 

For the root system $A_{N-1}$, the ODE (\ref{ODE-general}) has the form
\begin{equation}\label{ODE-a}
\frac{dx(t)}{dt}= H(x(t)) \quad\text{with} \quad H(x):=
\Bigl( \sum_{j\ne1} \frac{1}{x_1-x_j},\ldots,\sum_{j\ne N} \frac{1}{x_N-x_j} \Bigr)
\end{equation}
on the interior $ W_N^A$ of  the Weyl chamber $ C_N^A$.
By classical results on ODEs the following is straight forward; see Lemma 2.1 of \cite{AV1}:

\begin{lemma}\label{deterministic-boundary-A}
For $\epsilon>0$ consider the open subset $U_\epsilon:=\{x\in C_N^A:\> d(x,\partial C_N^A)>\epsilon\}$
with the Euclidean distance $d$ on  $\mathbb R^N$. Then  $H$ is Lipschitz-continuous on each $U_\epsilon$, and
 for each  $x_0\in U_\epsilon$, the ODE (\ref{ODE-a}) with $x(0)=x_0$ admits a unique solution. 
This solution exists for all $t>0$  with $x(t)\in U_\epsilon$.
\end{lemma}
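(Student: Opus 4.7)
The plan is to handle three ingredients: Lipschitz continuity of $H$ on $U_\epsilon$ (so that Cauchy--Lipschitz applies), a polynomial bound on $|x(t)|$ (to exclude escape to infinity), and a monotonicity estimate on the minimum consecutive gap (to exclude approach to $\partial C_N^A$).

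For the Lipschitz claim, I would use that $\partial C_N^A$ is the union of the hyperplanes $\{x_i=x_{i+1}\}$, so $d(x,\partial C_N^A)=\min_i(x_i-x_{i+1})/\sqrt 2$; hence on $U_\epsilon$ every consecutive gap exceeds $\sqrt 2\,\epsilon$, and telescoping yields $x_i-x_j\ge(j-i)\sqrt 2\,\epsilon$ for all $i<j$. Every denominator in $H$ is bounded away from $0$, so $H$ is $C^\infty$ on $U_\epsilon$ with uniformly bounded Jacobian, and in particular Lipschitz. Cauchy--Lipschitz then produces a unique maximal $C^1$-solution $x:[0,T_{\max})\to U_\epsilon$. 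Escape to infinity is ruled out by antisymmetry of $H$: one has $\frac{d}{dt}\sum_i x_i\equiv 0$ and, after symmetrising the summand, $\frac{d}{dt}\sum_i x_i^2 = 2\sum_{i\ne j}\frac{x_i}{x_i-x_j}=N(N-1)$, so $|x(t)|^2$ grows at most linearly.

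To show that $x(t)$ never reaches $\partial C_N^A$, set $m_j(t):=x_j(t)-x_{j+1}(t)$ and $m(t):=\min_j m_j(t)$. For any time $t$ and any index $j$ with $m_j(t)=m(t)$, direct differentiation yields
\[
\dot m_j(t)=\frac{2}{m(t)}-m(t)\sum_{l\ne j,j+1}\frac{1}{(x_j-x_l)(x_{j+1}-x_l)}.
\]
Telescoping with $m_i(t)\ge m(t)$ for every $i$ gives $|x_j-x_l|\ge|l-j|\,m(t)$ and $|x_{j+1}-x_l|\ge|l-j-1|\,m(t)$ for all $l\ne j,j+1$, and together with $\sum_{k\ge 1}\frac{1}{k(k+1)}=1$ this bounds the sum strictly below $2/m(t)^2$, whence $\dot m_j(t)>0$ for every active $j$. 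Since $m$ is Lipschitz with upper Dini derivative $D^+m(t)=\min_{j\text{ active}}\dot m_j(t)>0$, $m$ is non-decreasing. Consequently $d(x(t),\partial C_N^A)\ge d(x_0,\partial C_N^A)>\epsilon$ throughout $[0,T_{\max})$, which together with the linear growth of $|x(t)|^2$ forces $T_{\max}=+\infty$ and $x(t)\in U_\epsilon$ for every $t\ge 0$.

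The main obstacle is the non-smoothness of $m(t)$: the set of active indices can jump with time, so $m$ is only Lipschitz, not $C^1$. This is why the pointwise estimate above is designed to depend only on the global inequality $m_i(t)\ge m(t)$ rather than on strict uniqueness of the minimising index, so that it applies uniformly at every currently active index and feeds cleanly into the Dini-derivative step.
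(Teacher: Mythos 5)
Your argument is correct and complete. Note that the paper does not actually prove this lemma: it is imported from Lemma 2.1 of \cite{AV1} with the remark that it is ``straight forward'' from classical ODE theory, so there is no in-paper proof to compare with. The two quantitative inputs you use are exactly the ones this model is built on: the identity $\frac{d}{dt}\|x(t)\|^2=N(N-1)$, obtained by symmetrising $\sum_{i\ne j}x_i/(x_i-x_j)$, reappears verbatim as Lemma \ref{growth-a}, and the monotonicity of the minimal spacing $m(t)=\min_j(x_j(t)-x_{j+1}(t))$ --- proved via the telescoping bounds $|x_j-x_l|\ge|l-j|\,m$, $|x_{j+1}-x_l|\ge|l-j-1|\,m$ and $\sum_{k\ge1}\frac{1}{k(k+1)}=1$, which make the interaction sum strictly smaller than $2/m^2$ --- is the standard repulsion estimate for Calogero--Moser--Sutherland drifts; together with the norm identity it rules out both finite-time escape to infinity and approach to $\partial C_N^A$, so the maximal solution is global and stays in $\{x:\min_i(x_i-x_{i+1})\ge m(0)\}\subset U_\epsilon$. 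Two small points you should make explicit, neither of which is a gap: (i) the step ``uniformly bounded Jacobian implies Lipschitz'' uses that $U_\epsilon=\{x:\min_i(x_i-x_{i+1})>\sqrt{2}\,\epsilon\}$ is convex, being an intersection of open half-spaces; (ii) in the Dini-derivative step, the right-hand derivative of a finite minimum of $C^1$ functions exists and equals $\min_{j\ \mathrm{active}}\dot m_j(t)$, and a continuous function with nonnegative right derivative is nondecreasing.
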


We next recapitulate from \cite{AV1} that (\ref{ODE-a}) has the following particular solutions:

\begin{lemma}\label{special-solution}
Let $(H_N)_{N\ge 0}$ be the Hermite polynomials which are orthogonal w.r.t.
 the density  $e^{-x^2}$ as in the monograph \cite{S}. Let $z:=(z_1,\ldots,z_N)\in  W_N^A$ be the vector consisting
of the ordered zeros of the Hermite polynomial $H_N$.
Then for each $c>0$, $x(t):=\sqrt{2t+c^2}\cdot {z}$ is the solution of  (\ref{ODE-a}) with $x(0)=cz$.
\end{lemma}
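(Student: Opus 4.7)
The plan is a direct verification: substitute the candidate solution into the ODE and check both sides match, then invoke the previous lemma for uniqueness. First I would differentiate $x(t)=\sqrt{2t+c^2}\cdot z$ componentwise, obtaining $\frac{dx_i}{dt}= z_i/\sqrt{2t+c^2}$. Then I would compute the $i$-th coordinate of $H(x(t))$, and since the scaling factor $\sqrt{2t+c^2}$ pulls out of each summand, this reduces to
\[
H(x(t))_i \;=\; \frac{1}{\sqrt{2t+c^2}}\sum_{j\ne i}\frac{1}{z_i-z_j}.
\]
So everything comes down to verifying the classical identity $\sum_{j\ne i} (z_i-z_j)^{-1}=z_i$ for the zeros $z_1,\ldots,z_N$ of the Hermite polynomial $H_N$.

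To prove that identity I would use the Hermite differential equation $H_N''(x)-2xH_N'(x)+2NH_N(x)=0$, evaluated at a zero $z_i$, which gives $H_N''(z_i)=2z_i H_N'(z_i)$. On the other hand, writing $H_N(x)=\alpha\prod_{k=1}^N(x-z_k)$ and splitting off the factor $(x-z_i)$, a short computation shows
\[
\frac{H_N''(z_i)}{H_N'(z_i)}\;=\;2\sum_{j\ne i}\frac{1}{z_i-z_j}.
\]
Combining the two relations yields $z_i=\sum_{j\ne i}(z_i-z_j)^{-1}$, as required. This shows that $x(t)$ solves (\ref{ODE-a}), and evaluating at $t=0$ confirms $x(0)=cz$.

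For uniqueness, I would note that $cz$ lies in the interior $W_N^A$ (since $c>0$ and the zeros of $H_N$ are distinct), hence $cz\in U_\epsilon$ for some $\epsilon>0$. Lemma \ref{deterministic-boundary-A} then gives a unique solution starting from $cz$, forcing it to coincide with the explicit formula above.

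The only nontrivial step is the Hermite-zeros identity $z_i=\sum_{j\ne i}(z_i-z_j)^{-1}$; everything else is bookkeeping. Since this identity is a well-known consequence of the Hermite ODE (and is already implicitly used in \cite{AV1}), I expect the whole proof to be quite short.
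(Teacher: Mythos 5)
Your proof is correct, and it is essentially the standard argument: the paper itself states this lemma without proof (recapitulating it from \cite{AV1}), and the verification there likewise reduces to the Stieltjes-type identity $\sum_{j\ne i}(z_i-z_j)^{-1}=z_i$ for the zeros of $H_N$, which you derive correctly from the Hermite differential equation $H_N''-2xH_N'+2NH_N=0$. Your uniqueness step via Lemma \ref{deterministic-boundary-A} is also fine, since $cz$ lies in the open Weyl chamber for $c>0$.
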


The growth of these particular solutions w.r.t.~the Euclidean norm on $\mathbb R^N$ is typical; see \cite{VW}:

\begin{lemma}\label{growth-a} For each solution $x$ of  (\ref{ODE-a}) with start in  $ W_N^A$,
\begin{equation}\label{ODE-norm}
\|x(t)\|^2 =N(N-1)t+\|x(0)\|^2.
\end{equation}
\end{lemma}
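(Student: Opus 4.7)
The plan is to differentiate $\|x(t)\|^2$ along the flow and identify the right-hand side as a constant equal to $N(N-1)$.

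First I would note that by Lemma~\ref{deterministic-boundary-A}, the solution $x(t)$ stays in the interior $W_N^A$ as long as it exists (so that $H$ is well-defined along it), and $x$ is differentiable. Thus
\begin{equation*}
\frac{d}{dt}\|x(t)\|^2 = 2\langle x(t), \dot x(t)\rangle = 2\langle x(t), H(x(t))\rangle
= 2\sum_{i=1}^N x_i \sum_{j\ne i}\frac{1}{x_i-x_j}
= 2\sum_{i\ne j}\frac{x_i}{x_i-x_j}.
\end{equation*}

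The key step is the standard symmetrization trick: pair each ordered pair $(i,j)$ with its transpose $(j,i)$ and observe
\begin{equation*}
\frac{x_i}{x_i-x_j}+\frac{x_j}{x_j-x_i} = \frac{x_i-x_j}{x_i-x_j} = 1.
\end{equation*}
There are $\binom{N}{2}$ such unordered pairs $\{i,j\}$, so $\sum_{i\ne j}\frac{x_i}{x_i-x_j}=\binom{N}{2}=\frac{N(N-1)}{2}$.

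Combining, $\frac{d}{dt}\|x(t)\|^2 = N(N-1)$, which is independent of $t$ and of the particular trajectory. Integrating from $0$ to $t$ yields the claimed identity \eqref{ODE-norm}. There is no real obstacle here; the only subtle point is the legitimacy of the pointwise computation, which is handled by the fact that $x(t)\in W_N^A$ so that all denominators $x_i(t)-x_j(t)$ are strictly nonzero along the solution.
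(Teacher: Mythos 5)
Your proof is correct: the paper itself defers this lemma to the reference [VW], but the computation you give—differentiating $\|x(t)\|^2$ and symmetrizing $\sum_{i\ne j} x_i/(x_i-x_j)$ over the pairs $(i,j)$ and $(j,i)$ to get $\binom{N}{2}$—is the standard argument and is exactly the same pairing trick the paper uses in the proof of Lemma \ref{symmetric-pol-pol-in-t} (of which this lemma is noted to be a special case, via $\|x\|^2=e_1(x)^2-2e_2(x)$). No gaps; the justification that the denominators stay nonzero along the flow is the right point to flag and you handle it.
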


Lemma \ref{growth-a} motivates a decomposition of solutions into an easy radial part and and a spherical 
part  on
the unit sphere in $\mathbb R^N$. If doing so, the spherical
 parts of the special solutions in \ref{special-solution}
correspond to a stationary solution. This stationary solution
satisfies the following stability result:

\begin{lemma}\label{stabilily-lemma}
For each initial value $x_0\in W_N^A $, the solution $x$ of (\ref{ODE-a})  has the form
$$x(t)= \sqrt{ N(N-1)t+\|x_0\|^2}\cdot \phi(t)  \quad\quad(t\ge0)$$
where  $\phi$ satisfies
$$\|\phi(t)\|=1  \quad\quad\text{and}\quad\quad \lim_{t\to\infty} \phi(t) = \sqrt{\frac{2}{N(N-1)}}\cdot  z .$$
\end{lemma}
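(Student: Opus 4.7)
The plan is to split the solution into a radial factor, already controlled by Lemma \ref{growth-a}, and a spherical factor, derive a time-changed autonomous ODE for the latter, recognize it as the gradient ascent of a strictly concave function, and obtain convergence to the unique equilibrium via a LaSalle-type argument.

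First I would set $r(t):=\sqrt{N(N-1)t+\|x_0\|^2}$ and $\phi(t):=x(t)/r(t)$, so Lemma \ref{growth-a} forces $\|\phi(t)\|=1$. Since each entry of $H$ is homogeneous of degree $-1$, differentiating $x(t)=r(t)\phi(t)$ and using $\dot r = N(N-1)/(2r)$ yields
\begin{equation*}
\dot\phi(t)=\frac{1}{r(t)^2}\Bigl(H(\phi(t))-\tfrac{N(N-1)}{2}\phi(t)\Bigr).
\end{equation*}
The time change $s(t):=\int_0^t r(u)^{-2}\,du$ is a bijection $[0,\infty)\to[0,\infty)$, and in the new time variable the equation becomes the autonomous
\begin{equation*}
\phi'(s)=H(\phi(s))-\tfrac{N(N-1)}{2}\phi(s)
\end{equation*}
on the open Weyl chamber $W_N^A$.

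Next I would introduce the potential $V(y):=\sum_{i<j}\ln(y_i-y_j)$, so that $H=\nabla V$, and $F(y):=V(y)-\tfrac{N(N-1)}{4}\|y\|^2$; then the autonomous equation reads $\phi'=\nabla F(\phi)$. A short Hessian computation gives $u^{\top}\nabla^2 V(y)\,u=-\sum_{i<j}(u_i-u_j)^2/(y_i-y_j)^2\le 0$, so $V$ is concave and $F$ is strictly concave on $W_N^A$. The algebraic identity $\phi\cdot H(\phi)=\sum_{i<j}(\phi_i-\phi_j)/(\phi_i-\phi_j)=N(N-1)/2$ implies $\frac{d}{ds}\|\phi\|^2=N(N-1)(1-\|\phi\|^2)$, so the unit sphere is invariant under the $s$-flow. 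Since $V$, and hence $F$, tends to $-\infty$ on $\partial W_N^A$ while $F(\phi(s))$ is non-decreasing, $\phi$ is trapped in a compact subset of $W_N^A\cap\{\|y\|=1\}$.

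To conclude, any critical point of $F$ in $W_N^A$ solves $H(y)=\tfrac{N(N-1)}{2}y$; strict concavity allows at most one such point, while Lemma \ref{special-solution} (which forces $H(z)=z$ upon differentiating $x(t)=\sqrt{2t+c^2}\,z$) exhibits $\phi^{*}:=\sqrt{2/(N(N-1))}\,z$ as such a point, hence the unique one. Standard LaSalle reasoning on the compact invariant set then shows the nonempty $\omega$-limit set consists of critical points of $F$, so it equals $\{\phi^{*}\}$, giving $\phi(s)\to\phi^{*}$; since $s\to\infty$ exactly when $t\to\infty$, the claim follows. The main obstacle will be controlling the trajectory near the singular locus $\partial W_N^A$, where $H$ blows up; the function $F$ does this work twice, confining the flow through its blow-up at the boundary and, via its strict concavity, bypassing any direct analysis of the algebraic system for the zeros of $H_N$ by rendering uniqueness of the equilibrium automatic.
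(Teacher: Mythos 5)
Your argument is correct and follows essentially the same route as the paper's proof: the identical radial--spherical decomposition $x=r\phi$, a time change turning the $\phi$-equation into the autonomous gradient system $\phi'=\nabla F(\phi)$ with $F(y)=\sum_{i<j}\ln(y_i-y_j)-\tfrac{N(N-1)}{4}\|y\|^2$, and identification of $\sqrt{2/(N(N-1))}\,z$ as the unique maximizer toward which the flow converges. The only difference is that you make the cited ingredients explicit (the Hessian computation for strict concavity, the homogeneity argument giving $H(z)=z$, and the LaSalle argument for global convergence on the compact invariant set), whereas the paper refers to \cite{AV1}, \cite{AKM1} for the uniqueness of the maximum and to \cite{HS} for the stability of the equilibrium; your version is, if anything, slightly more self-contained on the global (rather than merely local) convergence.
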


\begin{proof}
Using (\ref{ODE-norm}), we define
\begin{equation}\label{decomposition-phi} \phi(t) := 
(\phi_{1}(t), \ldots,  \phi_{N}(t)):= \frac{1}{ \sqrt{ N(N-1)t+\|x_0\|^2}}\cdot x(t)=
\frac{x(t)}{\|x(t)\|}
\end{equation}
with
$\| \phi(t) \|=1$. The ODE (\ref{ODE-a})  implies that
\begin{align}
\frac{d}{dt}(\phi_{i}(t)) &=  \frac{\dot x_i(t)}{\sqrt{ N(N-1)t+\|x_0\|^2}} - 
 \frac{N(N-1)\cdot x_i(t)}{2(N(N-1)t+\|x_0\|^2)^{3/2}}   \notag\\
 &=  \frac{1}{N(N-1)t+\|x_0\|^2} \Biggl(\sum_{j\ne i}  \frac{\sqrt{ N(N-1)t+\|x_0\|^2}}{x_i(t)-x_j(t)}-
 \frac{ N(N-1)}{2} \phi_{i}(t) \Biggr) \notag\\
&= \frac{1}{N(N-1)t+\|x_0\|^2} \Biggl(\sum_{j\ne i}  \frac{1}{\phi_{i}(t)-\phi_{j}(t)}
-\frac{ N(N-1)}{2} \phi_{i}(t) \Biggr).\notag
\end{align}
Therefore,
$$\psi(t) := \phi\Bigl( \frac{ N(N-1)}{2}t^2+ \|x_0\|^2t \Bigr) \quad(t\ge0)$$
satisfies
\begin{equation}\label{ODE-tilde}
\dot{\psi}_i(t) = \sum_{j\ne i}  \frac{1}{\psi_{i}(t)-\psi_{j}(t)}
-\frac{ N(N-1)}{2} \psi_{i}(t)\quad(i=1,\ldots,N)
\end{equation}
with $\psi(0)=\phi(0)=x_0/\|x_0\|$. The ODE (\ref{ODE-tilde}) is a gradient sytem
 $\dot{\psi}=(\nabla u)(\psi)$ with
$$u(y):= 2\sum_{i,j=1,\ldots,N, i< j}\ln(y_i-y_j) - \|y\|^2\cdot N(N-1)/4.$$
It now follows from Lemma 2.2 of \cite{AV1} (or see \cite{AKM1} or Section 6.7 of \cite{S})
that $u$ admits a unique local maximum on $C_N^A$,
 that this maximum is a global one, and that it located at 
$$\sqrt{\frac{2}{N(N-1)}}\cdot z$$
where, by (D.22) of \cite{AKM1}, this vector has $\|.\|_2$-norm 1.
 We conclude from Section 9.4 of \cite{HS} on gradient systems
that this point is an asymptotically stable equilibrium point of the ODE  (\ref{ODE-tilde}). 
This and (\ref{decomposition-phi}) now lead to the claim. 
\end{proof}

We cannot determine $x$ explicitly for arbitrary starting points and  $N$.
 On the other hand, Lemma \ref{growth-a} is a special case of the  observation that
for each symmetric polynomial $p$ in $N$ variables, $t\mapsto p(x(t))$ is a polynomial in $t$ which can be computed
explicitly. In this context we shall now employ 
 the elementary
 symmetric polynomials $e_k:=e_k^{N}$ ($k=0,\ldots,N$) in $N$ variables which are characterized by
\begin{equation}\label{symmetric-poly}
\prod_{k=1}^N (z-x_k) = \sum_{k=0}^{N}(-1)^{N-k}  e_{N-k}(x) z^k \quad\quad (z\in\mathbb C, \> x=(x_1,\ldots,x_N)).
\end{equation}
In particular,
$e_0=1, \> e_1(x)=\sum_{k=1}^N x_k ,\ldots, e_N(x)=\prod_{k=1}^N x_k$.
As each symmetric polynomial in $N$ variables is a polynomial in  $e_1,\ldots,e_N$ by a classical result,
 the following lemma shows
that all symmetric polynomials of
 $x$ are polynomials in $t$.

\begin{lemma}\label{symmetric-pol-pol-in-t}
For each initial value $x_0=(x_{0,1},\ldots, x_{0,N})\in W_N^A$, let $x$ be the solution  of (\ref{ODE-a}).
Then, for   $k=0 ,\ldots,N$, $t\mapsto e_k(x(t))$
 is a polynomial in $t$ of degree (at most) $\lfloor\frac{k}{2}\rfloor $ where the leading coefficient of
order $\lfloor\frac{k}{2}\rfloor $ is given by
$$ \frac{(-1)^l\cdot N!}{2^l \cdot l! (N-2l)!} \quad (k=2l\le N) \quad\text{and}\quad
  \frac{(-1)^l \cdot (N-1)!}{2^l \cdot l! (N-2l-1)!}\cdot\sum_{j=1}^N x_{0,j} \quad (k=2l+1\le N).$$
\end{lemma}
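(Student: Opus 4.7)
The plan is to derive a clean linear recursion expressing $\frac{d}{dt}e_k(x(t))$ in terms of $e_{k-2}(x(t))$ and then prove the claim by induction on $k$. The recursion eliminates any need to track the coordinates $x_i(t)$ directly.

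First I would write, using $\partial e_k/\partial x_i = e_{k-1}(x^{(i)})$ (where $x^{(i)}$ denotes the vector with the $i$-th coordinate removed),
\begin{equation*}
\frac{d}{dt}e_k(x(t)) = \sum_{i=1}^N \dot x_i\, e_{k-1}(x^{(i)}) = \sum_{i \neq j} \frac{e_{k-1}(x^{(i)})}{x_i-x_j}.
\end{equation*}
Symmetrizing each pair $(i,j),(j,i)$ and using the elementary identity $e_{k-1}(x^{(i)}) - e_{k-1}(x^{(j)}) = (x_j-x_i)\, e_{k-2}(x^{(i,j)})$ (where $x^{(i,j)}$ omits both coordinates; this follows from $e_{k-1}(x^{(i)}) = e_{k-1}(x^{(i,j)}) + x_j\, e_{k-2}(x^{(i,j)})$), all singularities cancel and
\begin{equation*}
\frac{d}{dt}e_k(x(t)) = -\sum_{i<j} e_{k-2}(x^{(i,j)}).
\end{equation*}

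Second, I would count: each monomial in $e_{k-2}$ involves $k-2$ indices, so it appears in $e_{k-2}(x^{(i,j)})$ for exactly $\binom{N-k+2}{2}$ choices of the pair $\{i,j\}$. Hence the key recursion
\begin{equation*}
\frac{d}{dt}e_k(x(t)) = -\binom{N-k+2}{2}\, e_{k-2}(x(t)) = -\frac{(N-k+2)(N-k+1)}{2}\, e_{k-2}(x(t)).
\end{equation*}

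Third, I would conclude by induction on $k$. The cases $k=0$ (constant $1$) and $k=1$ (constant $\sum_j x_{0,j}$, since $e_{-1}:=0$) are immediate. For the inductive step with $k=2l$, the inductive hypothesis gives $\deg_t e_{k-2}(x(t)) = l-1$ with leading coefficient $\frac{(-1)^{l-1}N!}{2^{l-1}(l-1)!(N-2l+2)!}$. Integrating the recursion once, the leading coefficient of $e_k(x(t))$ is
\begin{equation*}
\frac{1}{l}\cdot\left(-\frac{(N-2l+2)(N-2l+1)}{2}\right)\cdot \frac{(-1)^{l-1}N!}{2^{l-1}(l-1)!(N-2l+2)!} = \frac{(-1)^l N!}{2^l\, l!\,(N-2l)!},
\end{equation*}
and the analogous calculation works verbatim for $k=2l+1$, producing the factor $\sum_{j=1}^N x_{0,j}$ inherited from the base case $k=1$. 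The degree bound also follows since integration raises degree by exactly one.

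I do not anticipate a serious obstacle: the only nontrivial ingredient is the symmetrization/cancellation identity that collapses the singular sum into $-\binom{N-k+2}{2}e_{k-2}$. Once this recursion is in hand, the rest is bookkeeping with factorials. In fact, this recursion already implies something stronger, namely that $e_k(x(t))$ is determined by $e_0(x_0),\ldots,e_k(x_0)$, which will presumably be exploited later in the paper to solve (\ref{ODE-a}) explicitly up to root-finding for a degree-$N$ polynomial.
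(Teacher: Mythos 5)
Your proposal is correct and follows essentially the same route as the paper: the derivative formula $\partial e_k/\partial x_i = e_{k-1}(x^{(i)})$, symmetrization over pairs, the cancellation identity $e_{k-1}(x^{(i)})-e_{k-1}(x^{(j)})=(x_j-x_i)e_{k-2}(x^{(i,j)})$, the pair-counting step yielding $\frac{d}{dt}e_k(x(t))=-\tfrac12(N-k+2)(N-k+1)e_{k-2}(x(t))$, and induction from the cases $k=0,1$. The only difference is cosmetic: you make the final factorial bookkeeping explicit, which the paper leaves to the reader.
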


\begin{proof}
This is trivial for $k=0$ and can be easily verified for $k=1$.
For $k\ge2$ we use induction on $k$ and use the following notion:
For a non-empty set $S\subset \{1,\ldots,N\}$, the vector $x\in\mathbb R^{|S|}$ is the vector with the 
coordinates $x_i$ for $i\in S$ in the natural ordering on $S$. We then have for $k\ge2$ that
$$\frac{d}{dt} e_k(x(t))= \sum_{j=1}^N \frac{dx_j(t)}{dt}
 \cdot e_{k-1}^{N-1}(x_{\{1,\ldots,N\}\setminus\{j\}}(t)).$$
Therefore, by the differential equation,
\begin{align}\label{elementary-symm-a-1}
\frac{d}{dt} e_k(x(t))&=  \sum_{j=1}^N \sum_{i: i\ne j} 
\frac{  e_{k-1}^{N-1}(x_{\{1,\ldots,N\}\setminus\{j\}}(t))}{x_j(t)-x_i(t)}\\
&= \frac{1}{2}     \sum_{i,j=1,\ldots,n; i\ne j}
\frac{  e_{k-1}^{N-1}(x_{\{1,\ldots,N\}\setminus\{j\}}(t))- e_{k-1}^{N-1}(x_{\{1,\ldots,N\}\setminus\{i\}}(t))  }{x_j(t)-x_i(t)}.\notag
\end{align}
Moreover, simple combinatorial computations yield for $i\neq j$
\begin{eqnarray}\label{elementary-symm-a-2}
\lefteqn {e_{k-1}^{N-1}(x_{\{1,\ldots,N\}\setminus\{j\}}(t))- e_{k-1}^{N-1}(x_{\{1,\ldots,N\}\setminus\{i\}}(t))=}\\
&&\quad\quad\quad\quad\quad\quad\quad\quad\quad\quad\quad\quad(x_i(t)-x_j(t))e_{k-2}^{N-2}(x_{\{1,\ldots,N\}\setminus\{i,j\}}(t))\nonumber
\end{eqnarray}
and 
\begin{equation}\label{elementary-symm-a-3}
 \sum_{i,j=1,\ldots,N; i\ne j}e_{k-2}^{N-2}(x_{\{1,\ldots,N\}\setminus\{i,j\}}(t))= (N-k+2)(N-k+1)e_{k-2}^{N}(x(t)).
\end{equation}
Therefore, by (\ref{elementary-symm-a-1})-(\ref{elementary-symm-a-3}),
\begin{equation}\label{elementary-symm-a-4}
\frac{d}{dt} e_k(x(t))= - \frac{1}{2} (N-k+2)(N-k+1)e_{k-2}^{N}(x(t)).
\end{equation}
This recurrence relation and the cases $k=0,1$ now  lead to the claim.
\end{proof}

\begin{remark}\label{centering-wlog}
For each $r\in\mathbb R$
 and each solution $x$ of (\ref{ODE-a}), the function $x^r(t):= x(t)+r\cdot(1,\ldots,1)$
is also a  solution  of (\ref{ODE-a}).
This implies that we may assume $\sum_{j=1}^N x_{0,j} =0$ without loss of generality for our initial conditions.
If we do so, the degrees of the polynomials $t\mapsto e_k(x(t))$ for odd $k$ can be chosen to be even smaller.
\end{remark}

Lemma \ref{symmetric-pol-pol-in-t} and Eq.~(\ref{symmetric-poly})
 can be used to compute all solutions of (\ref{ODE-a}) on $W_N^A$.
 First, one has to determine the polynomials
 $e_k(x(t))$ ($k=1,\ldots,N)$. In a second step, one has to determine the ordered, different 
zeros of the polynomials 
in (\ref{symmetric-poly}) 
from the coefficients of the polynomials. This relation  corresponds to some  diffeomorphism.
We present an example.

\begin{example}\label{example-a} Let $N=3$. Let $x_0\in C_3^{A}$ with $x_{0,1}+x_{0,2}+x_{0,3}=0$ according to Remark
\ref{centering-wlog}. We here obtain
$$e_1(x(t))=0, \quad e_1(x(t))=-3t + e_2(x_0),  \quad e_3(x(t))=e_3(x_0)$$
and thus 
\begin{equation}\label{cardano-case}
\prod_{k=1}^3 (z-x_k(t)) = z^3 + (e_2(x_0)-3t)z - e_3(x_0).\end{equation}
We now apply Cardano's formula in the casus irreducibilis.
We  first observe that the existence of 3 real zeros implies  $3t-e_2(x_0)>0$; we have the 
solutions
$$ x_k(t)= \sqrt{ 4t-\frac{4}{3}e_2(x_0)} 
\cdot \cos\Bigl(  \frac{1}{3} \arccos \Bigl(\frac{\sqrt{27}}{2} \frac{e_3(x_0)}{(3t-e_2(x_0))^{3/2} } \Bigr) +
 \frac{2}{3}(1-k)\pi\Bigr)$$
for $k=1,2,3$. The correct ordering $x_1(t)\ge x_2(t)\ge x_3(t)$ 
here follows easily from the case $t\to\infty$ in which case 
\begin{align}
 x(t)&= \sqrt{ 4t-\frac{4}{3}e_2(x_0)} \cdot\Bigl( (\sqrt{3/4}, 0,-\sqrt{3/4})+ O(t^{-3/2})\Bigr)\notag\\
&= \sqrt{ 6t-2e_2(x_0)} \cdot (\sqrt{1/2}, 0,-\sqrt{1/2}) + O(t^{-1})  \quad=:\quad \tilde x(t) + O(t^{-1})
\notag
\end{align}
with a solution $\tilde x$ of our differential equation of the type of Lemma \ref{special-solution}.
This improves Lemma \ref{stabilily-lemma} in a quantitative way for $N=3$.

We also remark that the discriminant of the polynomial (\ref{cardano-case})  is 
$$\Delta:= e_3(x_0)^2/4 + (e_2(x_0)-3t)^3/27.$$
By  Cardano's formula, $\Delta=0$ holds if and only if we have multiple (real)
 zeros in (\ref{cardano-case}), i.e., a point in $\partial C_3^{A}$. Hence,
 if we formally start our solution at some $x_0\in \partial C_3^{A}$ with $x_{0,1}+x_{0,2}+x_{0,3}=0$,
then  $x(t)\in W_3^{A}$ exists for all 
 all $t>0$.
By Remark \ref{centering-wlog}, this observation also holds 
for arbitrary starting points in $ C_3^{A}$.
\end{example}

The existence of solutions with a start on the  boundary in Example \ref{example-a} holds for arbitrary dimensions:

\begin{theorem}\label{ode-ex-unique-a-thm}
Let $N\ge 2$. 
Then, for each starting point $x_0\in C_N^A$, the ODE (\ref{ODE-a})
has a unique solution for all $t\ge0$ in the following sense:
For each $x_0\in C_N^A$ there is a unique continuous function
$x:[0,\infty[\to C_N^A$ with $x(0)=x_0$ such that for $t\in]0,\infty[$, $x(t)\in  W_N^{A} $
holds, and $x:]0,\infty[\to  W_N^{A} $  satisfies (\ref{ODE-a}).
\end{theorem}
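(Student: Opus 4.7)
\emph{Strategy and uniqueness.} The idea is to parametrize solutions via the coefficients of the monic polynomial $P_t(z) := \prod_{j=1}^N (z-x_j(t)) = \sum_{k=0}^N (-1)^k e_k(x(t))\, z^{N-k}$, which by the recurrence (\ref{elementary-symm-a-4}) are explicit polynomials in $t$. Let $e_k(t)$ denote the unique polynomial solutions of (\ref{elementary-symm-a-4}) with $e_k(0)=e_k(x_0)$, and set $P_t(z):=\sum_k (-1)^k e_k(t)\, z^{N-k}$; the desired $x(t)$ will be the ordered real root-tuple of $P_t$. For uniqueness, any second candidate $y$ satisfies the hypotheses of Lemma \ref{symmetric-pol-pol-in-t} on each $[t_0,\infty)$ with $t_0>0$, so $t\mapsto e_k(y(t))$ is polynomial on $[t_0,\infty)$, and continuity at $t=0$ identifies it with the unique polynomial solution of (\ref{elementary-symm-a-4}) with initial value $e_k(x_0)$. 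Thus $e_k(y(t))=e_k(t)$ on $[0,\infty)$, and $x$ and $y$ agree as ordered real roots of $P_t$.

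\emph{Existence by approximation.} Pick $x_0^{(n)}\in W_N^A$ with $x_0^{(n)}\to x_0$. By Lemma \ref{deterministic-boundary-A}, the solution $x^{(n)}$ of (\ref{ODE-a}) with $x^{(n)}(0)=x_0^{(n)}$ exists in $W_N^A$ for all $t\ge 0$; by Lemma \ref{symmetric-pol-pol-in-t} its elementary symmetric polynomials $e_k(x^{(n)}(t))$ solve (\ref{elementary-symm-a-4}) with initial values $e_k(x_0^{(n)})\to e_k(x_0)$, and since (\ref{elementary-symm-a-4}) is a triangular linear system these polynomials converge to $e_k(t)$ uniformly on compact $t$-intervals. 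Hence $P_t^{(n)}\to P_t$ uniformly on compacts in $(t,z)$; the ordered real roots $x^{(n)}(t)$ converge to the ordered real roots (with multiplicity) of $P_t$, yielding a continuous $x:[0,\infty)\to C_N^A$ with $x(0)=x_0$.

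\emph{The hard step: $x(t)\in W_N^A$ for all $t>0$.} Comparing coefficients, the recurrence (\ref{elementary-symm-a-4}) is equivalent to the identity $\partial_t P_t(z)=-\tfrac{1}{2}\partial_z^2 P_t(z)$, whence $P_t(z)=\sum_{j\ge 0}\frac{(-t/2)^j}{j!}P_0^{(2j)}(z)$ (a finite sum). Near any zero $z^*$ of $P_0$ of multiplicity $m\ge 1$, write $P_0(z)=(z-z^*)^m R(z)$ with $R(z^*)\ne 0$; substituting $z=z^*+\sqrt{t}\,w$ into the expansion and collecting terms of lowest order in $t$ yields
\[
P_t\bigl(z^*+\sqrt{t}\,w\bigr)=R(z^*)\,t^{m/2}\,\mathrm{He}_m(w)+O\bigl(t^{(m+1)/2}\bigr),
\]
where $\mathrm{He}_m$ is the monic Hermite polynomial, which is well known to have $m$ distinct real zeros. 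Hurwitz's theorem therefore produces $m$ simple real roots of $P_t$ in a $\sqrt{t}$-neighborhood of $z^*$ for all sufficiently small $t>0$; summing over the finitely many distinct roots of $P_0$ (whose $\sqrt{t}$-neighborhoods are disjoint for small $t$) accounts for all $N$ zeros of $P_t$, so $x(t)\in W_N^A$ on some $(0,t_1]$. From $x(t_1)\in W_N^A$, Lemma \ref{deterministic-boundary-A} supplies a classical ODE-solution in $W_N^A$ for all $t\ge t_1$, which by the uniqueness step coincides with $x$; this extends $x(t)\in W_N^A$ and (\ref{ODE-a}) to all of $(0,\infty)$. The local Hermite analysis near each multiple root is the sole non-routine ingredient; the rest is bookkeeping with Lemmas \ref{deterministic-boundary-A} and \ref{symmetric-pol-pol-in-t}.
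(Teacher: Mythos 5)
Your proposal is correct, and while the uniqueness half and the reduction to the coefficient flow coincide with the paper's argument (pass to the elementary symmetric polynomials, note that the triangular system (\ref{recursive-symm-eq-a})--(\ref{symm-eq-a-1}) has a unique polynomial solution for given initial data, and use injectivity of $e$ on $C_N^A$), your existence step is genuinely different. The paper argues softly and globally: if the coefficient curve stayed in $\overline{e(W_N^A)}\setminus e(W_N^A)$ on some interval $[0,t_0]$, then the discriminant $\tilde D(\tilde e(t))$, being a polynomial in $t$, would vanish for all $t\ge 0$; the scaling (\ref{modified-solution-c-a}) then yields a family of such ``stuck'' solutions whose initial values tend to $0$, contradicting continuous dependence on initial data together with the explicit Hermite-zero solution $\sqrt{2t}\cdot z$ started at the origin. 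You instead read (\ref{elementary-symm-a-4}) as the backward heat equation $\partial_t P_t=-\tfrac12\partial_z^2P_t$ for the characteristic polynomial and perform the blow-up $z=z^*+\sqrt{t}\,w$ at each multiple root, where the rescaled limit is the monic Hermite polynomial $\mathrm{He}_m$ whose $m$ simple real zeros split the multiplicity instantly. This buys more than the paper's proof: it gives the $\sqrt{t}$ separation rate and the Hermite profile of the splitting (a quantitative refinement in the spirit of Example \ref{example-a}), and it needs neither the explicit solution from $0$ nor continuous dependence; the price is the local asymptotic analysis. Two points you should make explicit: (i) the reality of the $m$ roots produced by Hurwitz --- either because $P_t$ is a locally uniform limit of the real-rooted $P_t^{(n)}$, or because each of these roots lies alone in a disk symmetric about the real axis, so it must coincide with its conjugate; (ii) that $x$ satisfies (\ref{ODE-a}) on all of $]0,t_1[$ and not only on $[t_1,\infty[$, which follows by running your restart-and-identify argument from every $t_1'\in\,]0,t_1]$ rather than from $t_1$ alone.
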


\begin{proof}
By Lemma \ref{deterministic-boundary-A}, it suffices to
assume $x_0\in\partial C_N^A$.  We  transform (\ref{ODE-a})
via elementary
symmetric polynomials as in Lemma \ref{symmetric-pol-pol-in-t}.
 Then, for $t>0$ and $k=2,\ldots,N$,
\begin{equation}\label{recursive-symm-eq-a}
\frac{d}{dt}\Bigl( e_k^{N}(x(t))\Bigr)=-\frac{1}{2} (N-k+2)(N-k+1)  e_{k-2}^{N}(x(t))
\end{equation}
where in particular,
$$\frac{d}{dt}\Bigl( e_2^{N}(x(t))\Bigr)=-\frac{1}{2} N(N-1).$$
Moreover, for $k=1$,
\begin{equation}\label{symm-eq-a-1}
\frac{d}{dt}\Bigl( e_1^{N}(x(t))\Bigr)= \sum_{i,j=1,\ldots,N; i\ne j} \frac{ 1}{x_i(t)-x_j(t)}=0.
\end{equation}
 These equations form an ODE whose solutions $\tilde e(t):=( e_1^{N}(x(t)),\ldots, e_N^{N}(x(t)))$ 
are polynomials in $t$ 
 by Lemma \ref{symmetric-pol-pol-in-t}.
As the mapping
\begin{align}\label{special-mapping}
e: C_N^A&\to E_N:=\Bigl\{y=(y_1,\ldots,y_N)\in\mathbb R^N: \quad\text{the polynomial}\notag\\
 &\quad \quad\quad\quad P(z):=\sum_{j=0}^{N}(-1)^{N-j} y_{N-j} z^j
  \text{ with}\>\> y_0=1 \>\>\text{has has only real zeros}\Bigr\}, \notag\\
 x &\mapsto (e_1(x),\ldots,e_N(x)) 
\end{align}
is continuous and injective,  the ODE (\ref{ODE-a}) with start in $x_0$
has at most one solution.

For the existence of a solution we notice that the restriction
$e: W_N^A \to e( W_N^A) $
of the mapping $e$ is a diffeomorphism while the extension to the closure
$$e: C_N^A\to e( C_N^A)= \overline{e( W_N^A)} \subset E_N $$
 is  a homeomorphism.
We claim that the inverse mapping of $e$ transforms 
the existent polynomial solutions of the ODEs (\ref{symm-eq-a-1}) and (\ref{recursive-symm-eq-a})
 back into solutions of the ODE
in  (\ref{ODE-a}) in the sense of the theorem. For this we prove that
for any starting point $x_0\in\partial C_N^A$ in (\ref{ODE-a}) and its image
$e(x_0)\in \overline{e( W_N^A)}  $,
 the  solution $\tilde e(t)$ ($t\ge0)$ of the ODEs
 (\ref{symm-eq-a-1}) and (\ref{recursive-symm-eq-a}) with $\tilde e(0)=e(x_0)$ satisfies
$\tilde e(t)\in e(W_N^A) $  for  $t>0$ sufficiently small.
If this is shown it follows that the preimage of  $(\tilde e(t))_{t\ge0}$ under $e$ is a solution of 
  (\ref{ODE-a}) in the sense of the theorem.

To prove this statement, we 
 recapitulate that for each starting point in $e(W_N^A)$
the  solution of the ODEs (\ref{symm-eq-a-1}) and (\ref{recursive-symm-eq-a}) satisfies 
 $\tilde e(t)\in e( W_N^A)$ for $t\ge0$, and that   for all fixed $t\ge0$,
the solutions $\tilde e(t)$ depend continuously from arbitrary starting points in $\mathbb R^N$
by a classical theorem on ODEs.
We thus conclude that for each starting point in $\tilde e(0)\in\overline{e( W_N^A)}$ we have
 $\tilde e(t)\in\overline{e( W_N^A)}$ for $t\ge0$.

We next observe that for each solution $x$ of    (\ref{ODE-a}) with start in 
$x(0)\in \partial C_N^A$, and for each $c>0$, the function $t\mapsto \frac{1}{\sqrt c} x(ct)$  also  solves
 the ODE in   (\ref{ODE-a}) with  start in $x(0)/\sqrt c$. Moreover, for each solution
 $(\tilde e(t))_{t\ge0}$ of the ODEs  (\ref{symm-eq-a-1}) and (\ref{recursive-symm-eq-a}) with start in
$e(x(0))$, and each $c>0$, the function 
\begin{equation}\label{modified-solution-c-a}
t\mapsto \tilde e_c(t):= ( c^{-1/2} \tilde e_1(ct),c^{-2/2} \tilde e_2(ct),\ldots, c^{-N/2} \tilde e_N(ct) )
\end{equation}
 is also a solution of 
 the ODEs  (\ref{symm-eq-a-1}) and (\ref{recursive-symm-eq-a}) with start in
$$ ( c^{-1/2}e_1(x(0)),\ldots, c^{-N/2}e_N(x(0)) ).$$

Assume now that
there is a starting point $x(0)\in \partial C_N^A$ and some $t_0>0$, such that the solution  $(e(t))_{t\ge0}$
 of (\ref{symm-eq-a-1}) and (\ref{recursive-symm-eq-a}) with start in $e(x(0))$ satisfies
$\tilde e(t)\not\in e( W_N^A)$ for $t\in[0,t_0]$.
This means that for $t\in[0,t_0]$, 
 \begin{equation}\label{solution-negative}
\tilde e(t)\in \overline{e( W_N^A)}\setminus e( W_N^A )
\subset Y:=\{ y\in\mathbb  R^N:\> \tilde D(y)=0\}
\end{equation}
for the discriminant mapping $\tilde D:\mathbb  R^N\to\mathbb  R$  which is defined as follows:
For $x=(x_1,\ldots,x_N)\in\mathbb  R^N$ we form the discriminant
$$D(x):=\prod_{1\le i< j\le N} (x_i-x_j)^2.$$
$D$ is  a symmetric polynomial in $x_1,\ldots,x_N$ and thus,
by a classical result on elementary symmetric polynomials, a polynomial $\tilde D$ in $e_1(x),\ldots,e_N(x)$.
This is the  discriminant mapping used above.
In summary we obtain that $\tilde D(\tilde e(t))=0$ for  $t\in[0,t_0]$ with some polynomial $\tilde D\circ \tilde e$
which yields that  $\tilde D(\tilde e(t))=0$ for all  $t\ge0$. As 
$Y\cup  e(W_N^A )=\emptyset$, we conclude that 
$\tilde e(t)\not\in e(W_N^A )$ holds for all $t\ge0$.
Therefore,  for all $c>0$, the modified solutions from (\ref{modified-solution-c-a}) satisfy
$ \tilde e_c(t)\not\in  e( W_N^A)$ for $t\ge0$.
For $c\to\infty$, the starting points $ \tilde e_c(0)$ of these solutions tend to the vector $0\in\mathbb R^N$.

On the other hand, if $z_1>z_2>\ldots> z_N$ are the ordered zeros of the Hermite polynomial $H_N$, then 
$x_0(t):=\sqrt{2t}\cdot(z_1,\ldots,z_N)$ solves (\ref{ODE-a}) 
with start in $0$ in the sense of our theorem by \cite{AV1}. This implies that 
$(\tilde e_0(t))_{t\ge0}$ is the solution of  (\ref{symm-eq-a-1}) and (\ref{recursive-symm-eq-a}) with start in $0$
where for this solution $\tilde e_0(t) \in e(W_N^A)$ holds for all $t>0$.
We thus obtain a contradiction to the continuous dependence of the solutions of the ODEs
 (\ref{symm-eq-a-1}) and (\ref{recursive-symm-eq-a}) from the starting points.
We thus obtain that there is no  starting point $x(0)\in \partial C_N^A$ and no $t_0>0$ with
$\tilde e(t)\not\in e( W_N^A)$ for $t\in[0,t_0]$. This completes the proof.
\end{proof}

Theorem \ref{ode-ex-unique-a-thm} can be supplemented by the following observation:

\begin{lemma}\label{lower-boundary-a}
Let $(x(t))_{t\ge0}\subset W_N^A$ be a solution of the ODE (\ref{ODE-a}). Then there is a unique $t_0<0$ such that
 $(x(t))_{t\ge t_0}$ is a solution of (\ref{ODE-a}) with $x(t_0)\in\partial C_N^A$ in the sense of
 Theorem \ref{ode-ex-unique-a-thm}, i.e.,    $(x(t))_{t> t_0}\subset W_N^A$ solves  (\ref{ODE-a}).
\end{lemma}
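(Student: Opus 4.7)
The plan is to mirror the strategy of Theorem \ref{ode-ex-unique-a-thm} and use the elementary symmetric polynomial transform $e:C_N^A\to e(C_N^A)\subset\mathbb R^N$ to detect the first backward hitting time of $\partial C_N^A$. First I would set $\tilde e(t):=(e_1(x(t)),\dots,e_N(x(t)))$; by Lemma \ref{symmetric-pol-pol-in-t} each component is a polynomial in $t$ on $[0,\infty)$ and hence extends uniquely to a polynomial map $\tilde e:\mathbb R\to\mathbb R^N$. Since having $N$ pairwise distinct real roots is an open condition on the coefficients of a monic polynomial, $e(W_N^A)$ is open in $\mathbb R^N$, so $\tilde e^{-1}(e(W_N^A))$ is open in $\mathbb R$ and contains $[0,\infty)$; I would let $(t_0,\infty)$ denote its connected component through $0$.

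To obtain $t_0>-\infty$ I would invoke Lemma \ref{growth-a}: the identity $\|x\|^2=e_1(x)^2-2e_2(x)$ yields, under the polynomial extension, $\tilde e_1(t)^2-2\tilde e_2(t)=N(N-1)t+\|x(0)\|^2$ for all $t\in\mathbb R$, and this is negative for $t<-\|x(0)\|^2/(N(N-1))$. That is incompatible with $\tilde e(t)\in e(C_N^A)$ because any $y\in C_N^A$ satisfies $e_1(y)^2-2e_2(y)=\|y\|^2\ge 0$, so $t_0\ge -\|x(0)\|^2/(N(N-1))$. At the point $t_0$ itself, continuity of the multiset of complex roots as a function of the coefficients, combined with $\mathbb R$ being closed in $\mathbb C$, gives $\tilde e(t_0)\in\overline{e(W_N^A)}\subseteq e(C_N^A)$. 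Using the homeomorphism $e:C_N^A\to e(C_N^A)$ recalled in the proof of Theorem \ref{ode-ex-unique-a-thm}, I obtain a well-defined preimage $x(t_0)\in C_N^A$, and by maximality of the component $(t_0,\infty)$ it cannot lie in $W_N^A$, so $x(t_0)\in\partial C_N^A$. The extended function $x:[t_0,\infty)\to C_N^A$ is continuous and satisfies (\ref{ODE-a}) on $(t_0,\infty)$ in the sense of Theorem \ref{ode-ex-unique-a-thm}.

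Uniqueness of $t_0$ is immediate: if $t_0<t_0'<0$ both satisfied the lemma, the extension corresponding to $t_0$ would force $x(t_0')\in W_N^A$ while the extension corresponding to $t_0'$ requires $x(t_0')\in\partial C_N^A$, a contradiction. The main obstacle I expect is confirming that the limiting tuple $\tilde e(t_0)$ really lies in $e(C_N^A)$ rather than corresponding to a polynomial that has acquired a conjugate pair of complex roots; this is where the geometry of the image $e(C_N^A)$ in $\mathbb R^N$ enters via the openness and closedness arguments above, and is not a formal consequence of the linear ODE satisfied by the elementary symmetric polynomials alone.
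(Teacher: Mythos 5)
Your proposal is correct and follows essentially the same route as the paper: both arguments rest on Lemma \ref{growth-a} (your identity $e_1(x)^2-2e_2(x)=\|x\|^2$ is exactly that lemma rewritten in $e$-coordinates) to bound the backward escape time from below, and on the homeomorphism $e:C_N^A\to\overline{e(W_N^A)}$ together with the polynomiality of $t\mapsto e_k(x(t))$ to obtain the existence of the boundary limit $x(t_0)\in\partial C_N^A$. The only difference is organizational --- the paper starts from the maximal backward solution of the ODE itself and cites the classical escape-from-compacta result before passing to $e$-coordinates, whereas you define $t_0$ directly from the polynomial extension of $\tilde e$ and the openness of $e(W_N^A)$ --- and the two constructions produce the same $t_0$.
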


\begin{proof} 
By Lemma \ref{deterministic-boundary-A}, the RHS of  (\ref{ODE-a}) is locally Lipschitz continuous on  $W_N^A$.
We thus have a maximal solution  $(x(t))_{t> t_0}\subset W_N^A$ with some $t_0\in[-\infty,0[$.
On the other hand, by Lemma \ref{growth-a}, this solution must satisfy
$\|x(t)\|^2 =N(N-1)t+\|x(0)\|^2$ for $t>t_0$ which implies that $t_0\ge -\|x(0)\|^2/(N(N-1))$ and that $x(t)$
 remains in some compactum for $t\downarrow t_0$. Moreover, we have  $d(x(t), \partial C_N^A)\to0$ for  $t\downarrow t_0$ 
by a classical result on ODEs.
We now consider the  homeomorphism
$$e: C_N^A\to e( C_N^A)= \overline{e( W_N^A)} \subset E_N $$
from the preceding proof. As by the results of the preceding proof
 $\lim_{t\downarrow t_0} e(x(t))$ exists, we conclude that
 $\lim_{t\downarrow t_0}  x(t)=:x(t_0)\in \partial C_N^A$ exists. This yields the claim.
\end{proof}

\section{The root system  $B_{N}$}

For the root systems $B_N$ we fix some constant $\nu>0$ as described in the introduction.
 The ODE (\ref{ODE-general}) here  has the form
\begin{equation}\label{ODE-b}
\frac{dx(t)}{dt}= H_\nu(x(t)) \quad\text{with}
 \quad H_\nu(x):=\left(\begin{matrix}\sum_{j\ne1}\Bigl( \frac{1}{x_1-x_j}+  
 \frac{1}{x_1+x_j}\Bigr)+\frac{\nu}{x_1}\\ \vdots\\ \sum_{j\ne N}
 \Bigl(\frac{1}{x_N-x_j}+   \frac{1}{x_N+x_j}\Bigr)+\frac{\nu}{x_N}\end{matrix}\right)
\end{equation}
on the interior $ W_N^B$ of  the closed Weyl chamber $ C_N^B$. We  recapitulate from \cite{AV1}:

\begin{lemma}\label{deterministic-boundary-B}
Let $\nu>0$.  For $\epsilon>0$ consider the open subset
 $$U_\epsilon:=\{x\in C_N^B:\> x_N>\frac{\epsilon \nu}{N-1}, \quad\text{and}\quad x_i-x_{i+1}>\epsilon
 \quad\text{for}\quad i=1,\ldots,N-1  \}$$
of $C_N^B$. 
 Then for each  $x_0\in U_\epsilon$, the ODE (\ref{ODE-b}) with $x(0)=x_0$ admits a unique solution. 
This solution exists for all $t>0$  with $x(t)\in U_\epsilon$.
\end{lemma}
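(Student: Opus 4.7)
My plan is to combine Picard--Lindel\"of on $U_\epsilon$, a growth identity for $\|x(t)\|^2$ that rules out escape to infinity, and a boundary analysis of $H_\nu$ showing that on each face of $\partial U_\epsilon\cap C_N^B$ the vector field is inward-pointing, so that the maximal solution remains in $U_\epsilon$ for all $t>0$.

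On $U_\epsilon$ every denominator appearing in $H_\nu$ is bounded below on bounded subsets: the gap constraints give $|x_i-x_j|\ge |i-j|\epsilon$, while $x_i+x_j\ge 2x_N>\tfrac{2\epsilon\nu}{N-1}$ and $x_i\ge x_N>\tfrac{\epsilon\nu}{N-1}$. Hence $H_\nu$ is $C^\infty$ and locally Lipschitz on $U_\epsilon$, and Picard--Lindel\"of gives a unique maximal solution $x\colon[0,T)\to U_\epsilon$. The pairing identity $\sum_{i\ne j}\tfrac{x_i}{x_i-x_j}=\sum_{i\ne j}\tfrac{x_i}{x_i+x_j}=\binom{N}{2}$ then yields
\[
\tfrac{d}{dt}\|x(t)\|^2=2\sum_i x_i\,H_\nu(x)_i=2N(N-1+\nu),
\]
so $\|x(t)\|^2=\|x_0\|^2+2N(N-1+\nu)t$ on $[0,T)$; the orbit is bounded on every compact time interval, and $T<\infty$ can only arise from the orbit hitting $\partial U_\epsilon$.

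I would rule this out face by face. On $\{x_N=\tfrac{\epsilon\nu}{N-1}\}$, the identity $\tfrac{1}{x_N-x_j}+\tfrac{1}{x_N+x_j}=-\tfrac{2x_N}{x_j^2-x_N^2}$ combined with $x_j\ge x_N+(N-j)\epsilon$ yields $\tfrac{2x_N}{x_j^2-x_N^2}\le\tfrac{2\nu}{k\epsilon(2\nu+k(N-1))}$ for $k=N-j$; a partial-fraction sum gives
\[
\tfrac{dx_N}{dt}\ge\tfrac{1}{\epsilon}\Bigl((N-1)-H_{N-1}+(N-1)\sum_{k=1}^{N-1}\tfrac{1}{2\nu+k(N-1)}\Bigr)>0,
\]
using $H_{N-1}\le N-1$ (strict for $N\ge 3$) and that the last sum is always strictly positive, so $x_N$ is strictly pushed inward on this face. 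For a gap face I would track $g(t):=\min_i(x_i(t)-x_{i+1}(t))$; a direct computation shows that for every index $i_0$ realizing this minimum,
\[
\dot g_{i_0}=\tfrac{2}{g_{i_0}}-g_{i_0}\Bigl[\sum_{j\ne i_0,i_0+1}\Bigl(\tfrac{1}{(x_{i_0}-x_j)(x_{i_0+1}-x_j)}+\tfrac{1}{(x_{i_0}+x_j)(x_{i_0+1}+x_j)}\Bigr)+\tfrac{\nu}{x_{i_0}x_{i_0+1}}\Bigr].
\]
The $A$-type telescoping estimate $\sum_{j\ne i_0,i_0+1}\tfrac{1}{(x_{i_0}-x_j)(x_{i_0+1}-x_j)}\le\tfrac{2-1/i_0-1/(N-i_0)}{g_{i_0}^2}$ leaves a positive baseline $\tfrac{1/i_0+1/(N-i_0)}{g_{i_0}}$, and the additional mirror and mass contributions are controlled by the lower bounds $x_i+x_j\ge\tfrac{2\epsilon\nu}{N-1}$ and $x_i x_{i+1}\ge\tfrac{\epsilon^2\nu^2}{(N-1)^2}$ furnished precisely by the $x_N$-threshold in the definition of $U_\epsilon$. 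A Danskin-type argument for the minimum of finitely many $C^1$ functions then upgrades $\dot g_{i_0}\ge 0$ at each realizing index into $g(t)$ being nondecreasing in $t$.

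The decisive point, and main obstacle, is the final sign check on the gap face: one has to verify that the $A$-type margin $(1/i_0+1/(N-i_0))/g_{i_0}$ dominates the extra $B$-type mirror and mass contributions uniformly on $\overline{U_\epsilon}$. This is exactly where the particular shape of the threshold $\tfrac{\epsilon\nu}{N-1}$ enters: the coupling between $x_N$, $\epsilon$ and $\nu$ is what makes the estimate close. Once this is settled, Picard--Lindel\"of together with the growth identity and the face-by-face drift analysis forces $T=\infty$ and $x(t)\in U_\epsilon$ for all $t>0$, giving both global existence and the forward invariance asserted by the lemma.
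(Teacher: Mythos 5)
The paper itself offers no proof of this lemma --- it is quoted from \cite{AV1} without argument --- so there is nothing in-paper to compare your proposal against; it has to stand on its own. The first three ingredients of your plan are sound: $H_\nu$ is smooth and locally Lipschitz on $U_\epsilon$ because the gap and $x_N$ constraints bound all denominators away from zero on bounded sets, the pairing identity does give $\frac{d}{dt}\|x(t)\|^2=2N(N-1+\nu)$ (consistent with Lemma \ref{growth-b}), and your analysis of the face $\{x_N=\epsilon\nu/(N-1)\}$, including the partial-fraction rewriting that yields $\dot x_N>0$ there, is correct.

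The step you yourself flag as ``the decisive point and main obstacle'' is, however, not merely unverified: the inequality you would need is false, and with it the forward invariance of $U_\epsilon$ as literally defined. In your formula for $\dot g_{i_0}$ with $i_0=N-1$, the mass term contributes $-g_{i_0}\,\nu/(x_{N-1}x_N)$; at the corner $x_N=\epsilon\nu/(N-1)$, $x_{N-1}=x_N+\epsilon$, $g_{N-1}=\epsilon$ this equals $-\frac{(N-1)^2}{(\nu+N-1)\epsilon}$, which for small $\nu$ is about $-(N-1)/\epsilon$ and already exceeds the \emph{entire} restoring term $2/g_{N-1}=2/\epsilon$ once $N\ge4$; for $N=3$ the $A$-type repulsion from $x_1$ tips the balance. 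A concrete interior counterexample: take $N=3$, $\nu=0.1$, $\epsilon=1$ and $x_0=(2.06,\,1.055,\,0.051)$, which lies in $U_1$ since $x_3>0.05$ and both gaps exceed $1$. A direct evaluation of (\ref{ODE-b}) gives $\dot x_2\approx 1.321$ and $\dot x_3\approx 1.845$, hence $\frac{d}{dt}(x_2-x_3)\approx -0.52<0$ while $x_2-x_3=1.004$, so the trajectory crosses the face $x_2-x_3=1$ and leaves $U_1$ within time $\approx 0.008$. Consequently no face-by-face inward-pointing argument can succeed for this particular $U_\epsilon$: the threshold $x_N>\epsilon\nu/(N-1)$ permits $\nu/x_N$ to be as large as $(N-1)/\epsilon$, which overwhelms the mutual repulsion between $x_{N-1}$ and $x_N$. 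To salvage the strategy you would have to work with a different family of invariant sets (for instance with constraints on $x_i^2-x_{i+1}^2$ and $x_N^2$, which are the natural coordinates for type $B$, or with an $x_N$-threshold decoupled from the gap size), and the statement itself should be checked against the precise formulation of Lemma 3.1 in \cite{AV1}; what is actually needed downstream is only that solutions starting in $W_N^B$ stay in $W_N^B$ and exist globally, which a corrected invariant family would still deliver.
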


We next recapitulate from \cite{AV1} that (\ref{ODE-b}) has some particular solutions. 
For this we recall that for $\alpha>0$
the Laguerre polynomials  $(L_n^{(\alpha)})_{n\ge0}$ are orthogonal w.r.t.~the density $e^{-x}\cdot x^{\alpha}$; see the monograph \cite{S} for details.  We know from \cite{AV1}:

\begin{lemma}\label{special-solution-B1}
Let $\nu>0$. Let $z_1^{(\nu-1)}>\ldots>z_N^{(\nu-1)}>0$ be the ordered zeros of $L_N^{(\nu-1)}$, and
$y:=(y_1, \ldots, y_N)\in W_N^B$ with
\begin{equation}\label{y-max-B1}
2(z_1^{(\nu-1)},\ldots, z_N^{(\nu-1)})= (y_1^2, \ldots, y_N^2).
\end{equation}
 Then for each $c>0$, $x(t):=\sqrt{t+c^2}\cdot y $ is a solution of (\ref{ODE-b}). 
\end{lemma}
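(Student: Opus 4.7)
The plan is to verify directly that the ansatz $x(t)=\sqrt{t+c^2}\,y$ satisfies (\ref{ODE-b}), by first exploiting the scaling structure of $H_\nu$ to reduce the time-dependent ODE to an algebraic fixed-point condition, and then recognizing this condition as the classical identity satisfied by the zeros of $L_N^{(\nu-1)}$. Observe first that every summand in $H_\nu$ is homogeneous of degree $-1$, so $H_\nu(\lambda x)=\lambda^{-1}H_\nu(x)$ for all $\lambda>0$. Since $\dot x(t)=y/(2\sqrt{t+c^2})$ and $H_\nu(x(t))=H_\nu(y)/\sqrt{t+c^2}$, the common factor $1/\sqrt{t+c^2}$ cancels, and the ODE is equivalent to the time-independent equation
\begin{equation*}
H_\nu(y)=\tfrac{1}{2}\,y.
\end{equation*}

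Next, using the identity $\frac{1}{y_i-y_j}+\frac{1}{y_i+y_j}=\frac{2y_i}{y_i^2-y_j^2}$ coordinatewise and multiplying by $y_i$, the condition $H_\nu(y)_i=y_i/2$ rewrites as
\begin{equation*}
2\sum_{j\ne i}\frac{y_i^2}{y_i^2-y_j^2}+\nu=\frac{y_i^2}{2},\qquad i=1,\ldots,N.
\end{equation*}
Substituting $u_i:=y_i^2/2$, which by hypothesis (\ref{y-max-B1}) equals $z_i^{(\nu-1)}$, and simplifying, this becomes
\begin{equation*}
2\sum_{j\ne i}\frac{1}{u_i-u_j}+\frac{\nu}{u_i}=1,\qquad i=1,\ldots,N.
\end{equation*}
Thus everything is reduced to checking this identity at the zeros of $L_N^{(\nu-1)}$.

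For the final step I would invoke the Laguerre differential equation $u p''(u)+(\nu-u)p'(u)+Np(u)=0$ satisfied by $p:=L_N^{(\nu-1)}$. Evaluated at a zero $u_i$ of $p$ it yields $p''(u_i)/p'(u_i)=(u_i-\nu)/u_i=1-\nu/u_i$. On the other hand, from the factorization $p(u)=C\prod_k(u-u_k)$ a standard logarithmic-derivative computation gives $p''(u_i)/p'(u_i)=2\sum_{j\ne i}1/(u_i-u_j)$. Equating the two expressions produces exactly the desired identity. No step is really difficult; the only point requiring attention is the precise matching of parameters, namely that the factor $2$ in $2z_i^{(\nu-1)}=y_i^2$ in (\ref{y-max-B1}) is forced by the $1/2$ appearing in the reduction $H_\nu(y)=y/2$, and that the index $\nu-1$ in the Laguerre family is what makes the constant term $\nu/u_i$ line up correctly in the fixed-point equation.
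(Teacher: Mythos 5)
Your proof is correct and complete. The paper itself gives no proof of this lemma: it is recapitulated from \cite{AV1}, where the vector $y$ arises as the maximizer of the logarithmic potential $\sum_{i<j}(\ln(y_i-y_j)+\ln(y_i+y_j))+\nu\sum_i\ln y_i-\|y\|^2/4$, whose critical-point equations are precisely the fixed-point condition $H_\nu(y)=y/2$ that you derive from homogeneity; the identification of the critical point with the Laguerre zeros then rests on exactly the Stieltjes electrostatic identity $2\sum_{j\ne i}1/(u_i-u_j)+\nu/u_i=1$ that you obtain from the Laguerre differential equation. So your route is a direct verification bypassing the variational characterization, and every step checks out: the degree $-1$ homogeneity of $H_\nu$ reduces the ODE to $H_\nu(y)=\tfrac12 y$; the substitution $u_i=y_i^2/2=z_i^{(\nu-1)}$ turns this into the displayed identity; and the two expressions for $p''(u_i)/p'(u_i)$ (from $up''+(\nu-u)p'+Np=0$ with $\alpha=\nu-1$, and from the product representation) match it exactly, including the factor $2$ in \eqref{y-max-B1} and the shift to the parameter $\nu-1$.
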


Again, the growth of these particular solutions is typical; see \cite{VW}:

\begin{lemma}\label{growth-b} For each solution $x$ of  (\ref{ODE-b}) with start in  $ W_N^B$,
\begin{equation}\label{ODE-norm-b}
\|x(t)\|^2 =2N(N+\nu-1)t+\|x(0)\|^2.
\end{equation}
\end{lemma}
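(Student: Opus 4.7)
The plan is to differentiate $\|x(t)\|^2$ with respect to $t$ and verify that the derivative is the constant $2N(N+\nu-1)$; integration then yields the claim. Since the solution stays in the open Weyl chamber $W_N^B$, the right-hand side of (\ref{ODE-b}) is smooth along the trajectory, so there are no integrability or regularity issues to worry about. This is the direct analogue of the computation implicit in Lemma \ref{growth-a} for type $A$.

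Carrying out the computation, one writes
$$\frac{d}{dt}\|x(t)\|^2 = 2\sum_{i=1}^N x_i(t)\dot x_i(t) = 2\sum_{i=1}^N x_i\!\left[\sum_{j\ne i}\!\left(\frac{1}{x_i-x_j}+\frac{1}{x_i+x_j}\right)+\frac{\nu}{x_i}\right]\!,$$
suppressing the $t$-dependence. The Calogero-type term trivially contributes $2N\nu$ after cancellation of $x_i$ with $1/x_i$. The remaining two double sums I would evaluate by the standard symmetrization trick of pairing the ordered pair $(i,j)$ with $(j,i)$ and using the elementary identities
$$\frac{x_i}{x_i-x_j}+\frac{x_j}{x_j-x_i}=1, \qquad \frac{x_i}{x_i+x_j}+\frac{x_j}{x_j+x_i}=1.$$
Each of the $N(N-1)$ ordered pairs thereby contributes $\tfrac{1}{2}$, so each double sum equals $\tfrac{1}{2}N(N-1)$. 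Adding everything gives
$$\frac{d}{dt}\|x(t)\|^2 = 2\left[\frac{N(N-1)}{2}+\frac{N(N-1)}{2}+N\nu\right] = 2N(N+\nu-1),$$
and integrating from $0$ to $t$ produces (\ref{ODE-norm-b}).

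There is essentially no obstacle here: the only conceptual point is to recognize that the symmetrization identities convert the otherwise singular sums into a constant, which mirrors exactly the argument in the type-$A$ case. The presence of the extra $\frac{1}{x_i+x_j}$ and $\frac{\nu}{x_i}$ terms simply adds another clean $\tfrac{1}{2}N(N-1)$ and $N\nu$, respectively, which together account for the constant $2N(N+\nu-1)$ appearing in place of $N(N-1)$.
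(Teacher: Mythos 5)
Your computation is correct and is exactly the standard argument: the paper itself does not prove Lemma \ref{growth-b} but defers it to \cite{VW}, and the symmetrization identities $\frac{x_i}{x_i-x_j}+\frac{x_j}{x_j-x_i}=1$ and $\frac{x_i}{x_i+x_j}+\frac{x_j}{x_j+x_i}=1$ together with the cancellation in the $\nu/x_i$ term give precisely $\frac{d}{dt}\|x(t)\|^2=2N(N+\nu-1)$. Nothing is missing.
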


We again decompose solutions of   (\ref{ODE-b})
 into an easy radial part and and a spherical 
part where  the spherical parts of the  solutions in \ref{special-solution}
correspond to a stationary solution. This stationary solution
satisfies the following stability result:

\begin{lemma}\label{stabilily-lemma-b}
For each starting point $x_0\in W_N^B$, the solution $x$ of (\ref{ODE-b}) has the form
$$x(t)= \sqrt{ 2N(N+\nu-1)t+\|x_0\|^2}\cdot \phi(t)  \quad\quad(t\ge0)$$
with
$$\|\phi(t)\|=1 \quad\quad\text{and}\quad\quad \lim_{t\to\infty} \phi(t) = \frac{2}{N(N-1)} y $$
and the vector $y$ from Lemma \ref{special-solution-B1}.
\end{lemma}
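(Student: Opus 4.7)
The strategy closely parallels that of Lemma \ref{stabilily-lemma}. First I would set
$$\phi(t) := \frac{x(t)}{\sqrt{2N(N+\nu-1)t+\|x_0\|^2}} = \frac{x(t)}{\|x(t)\|},$$
using Lemma \ref{growth-b} so that $\|\phi(t)\|=1$. Differentiating coordinate-wise, inserting (\ref{ODE-b}), and using the homogeneity $x_i\pm x_j = \|x\|(\phi_i\pm\phi_j)$ and $x_i = \|x\|\phi_i$ to pull the factor $\|x(t)\|$ out of each term, I obtain, after multiplying by $r(t)^2 := 2N(N+\nu-1)t+\|x_0\|^2$,
$$r(t)^2\,\dot\phi_i(t) = \sum_{j\ne i}\Bigl(\frac{1}{\phi_i(t)-\phi_j(t)}+\frac{1}{\phi_i(t)+\phi_j(t)}\Bigr)+\frac{\nu}{\phi_i(t)}-N(N+\nu-1)\phi_i(t).$$

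Exactly as in the A-case, I would then introduce a time change $\psi(t):=\phi(a(t))$ with an appropriate smooth bijection $a$ to absorb the factor $r(t)^2$ and convert this into the autonomous gradient flow
$$\dot\psi_i = \sum_{j\ne i}\Bigl(\frac{1}{\psi_i-\psi_j}+\frac{1}{\psi_i+\psi_j}\Bigr)+\frac{\nu}{\psi_i}-N(N+\nu-1)\psi_i \quad (i=1,\dots,N),$$
with initial value $\psi(0)=x_0/\|x_0\|$. A direct computation shows that the right-hand side equals $\nabla u(\psi)$ for
$$u(y) := \sum_{1\le i<j\le N}\ln(y_i^2-y_j^2) \;+\; \nu\sum_{i=1}^{N}\ln y_i \;-\; \frac{N(N+\nu-1)}{2}\|y\|^2$$
on $W_N^B$.

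The central step is to show that $u$ admits a unique critical point on $W_N^B$ and that it is a global maximum. Rewriting the critical-point equations in the variables $s_i := y_i^2/2$, one recovers the well-known Stieltjes-type characterization of the ordered zeros of the Laguerre polynomial $L_N^{(\nu-1)}$ as the unique equilibrium of a logarithmic energy with a linear external field (see \cite{AKM1} and Section 6.7 of \cite{S}); this identifies the maximizer as the vector $y$ of Lemma \ref{special-solution-B1}. The classical identity $\sum_i z_i^{(\nu-1)} = N(N+\nu-1)$ for the Laguerre zeros then yields $\|y\|^2 = 2N(N+\nu-1)$.

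Finally, by the gradient-flow stability result in Section 9.4 of \cite{HS}, this unique maximum is an asymptotically stable equilibrium of the autonomous ODE, so $\psi(t)$ and therefore $\phi(t)$ converges as $t\to\infty$ to the unit vector in the direction of $y$, which is the stated limit. The main obstacle is the uniqueness and non-degeneracy of the critical point of $u$ on $W_N^B$; the repulsive term $\nu/\psi_i$ with $\nu>0$ keeps any maximizer bounded away from the wall $\{y_N=0\}$, so the identification with Laguerre zeros goes through, and the remaining arguments are direct translations of those in the proof of Lemma \ref{stabilily-lemma}.
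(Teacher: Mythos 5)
Your proof follows essentially the same route as the paper's: the same normalization $\phi(t)=x(t)/\|x(t)\|$ via Lemma \ref{growth-b}, the same time change converting the spherical part into the autonomous gradient system $\dot\psi=\nabla u(\psi)$ with the same potential $u$, the identification of its unique global maximizer on $C_N^B$ with the ordered zeros of $L_N^{(\nu-1)}$, and the asymptotic-stability argument from Section 9.4 of \cite{HS}. The only (harmless) difference is that you verify the normalization of the limiting vector directly from the trace identity $\sum_i z_i^{(\nu-1)}=N(N+\nu-1)$, whereas the paper cites (C.10) of \cite{AKM2}.
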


\begin{proof} The proof is similar to that of Lemma \ref{stabilily-lemma}.
Using (\ref{ODE-norm-b}), we define
\begin{equation}\label{decomposition-phi-b} \phi_(t) := 
 \frac{1}{ \sqrt{ 2N(N+\nu-1)t+\|x_0\|^2}}\cdot x(t)=
\frac{x(t)}{\|x(t)\|}
\end{equation}
with
$\| \phi(t) \|=1$. (\ref{ODE-b})  implies that
\begin{align}
\frac{d}{dt}(\phi_{i}(t)) &=  \frac{\dot x_i(t)}{\sqrt{ 2N(N+\nu-1)t+\|x_0\|^2}} - 
 \frac{N(N+\nu-1)\cdot x_i(t)}{(2N(N+\nu-1)t+\|x_0\|^2)^{3/2}}   \notag\\
 &=  \frac{1}{2N(N+\nu-1)t+\|x_0\|^2} \Biggl(\sum_{j\ne i}  \frac{1}{\phi_{i}(t)-\phi_{j}(t)}
+\notag\\
&+\sum_{j\ne i}  \frac{1}{\phi_{i}(t)+\phi_{j}(t)}
+   \frac{ \nu }{\phi_{i}(t)}
- N(N+\nu-1) \phi_{i}(t) \Biggr).\notag
\end{align}
Hence,
$\psi(t) := \phi\Bigl( N(N+\nu-1)t^2+ \|x_0\|^2t \Bigr)$ for $t\ge0$
satisfies
\begin{align}\label{ODE-tilde-b}
\dot{\psi}_i(t) = &\sum_{j\ne i}  \frac{1}{\psi_{i}(t)-\psi_{j}(t)}+
\sum_{j\ne i}  \frac{1}{\psi_{i}(t)+\psi_{j}(t)}
 \notag\\
&\quad
+\frac{ \nu}{  \psi_{i}(t)} -   N(N+\nu-1)\cdot \psi_{i}(t)
\end{align}
for $i=1,\ldots,N$ with $\psi(0)=\phi_0(0)=x_0/\|x_0\|$. 
The ODE (\ref{ODE-tilde-b}) is a gradient system
 $\dot{\psi}=(\nabla u)(\psi)$ with
$$u(y):= \sum_{i,j=1,\ldots,N, i< j}(\ln(y_i-y_j) +\ln(y_i+y_j)) +\nu\sum_{i=1}^N \ln y_i  - \frac{N(N+\nu-1)}{2} 
\|y\|^2.$$
It now follows from Lemma 3.2 of \cite{AV1} (or see \cite{AKM2} or Section 6.7 of \cite{S})
that  $u$ admits a unique local maximum on $C_N^B$,
 that this maximum is a global one, and that it located at 
$y$ with 
$$(y_1^2,\ldots,y_N^2)=\frac{1}{N(N+\nu-1)}(z_1^{(\nu-1)},\ldots, z_N^{(\nu-1)}).$$
 We notice that  $\|y\|=1$ holds; this follows either from Lemmas \ref{special-solution-B1} and \ref{growth-b} or from
 (C.10) in \cite{AKM2}.
 These observations and  (\ref{decomposition-phi-b}) now lead to the claim as in the proof of Lemma
\ref{stabilily-lemma}.
\end{proof}

In order to describe the general solutions $x(t)$ of (\ref{ODE-b}) 
 we again  use 
 the elementary
 symmetric polynomials $e_k^{N}$ in $N$ variables  and put
$$\tilde e_k(x):=\tilde e_k^{N}(x):= e_k^{N}(x_1^2,\ldots, x_N^2)  \quad\quad (k=0,\ldots,N).$$

\begin{lemma}\label{symmetric-pol-pol-in-t-b}
For each  $x_0\in W_N^B$, consider the solution $x(t)$ of  (\ref{ODE-b}).
 Then, for   $k=0 ,\ldots,N$, $t\mapsto\tilde e_k(x(t))$
 is a polynomial in $t$ of degree  ${k} $ with leading coefficient 
$$  2^k (N+\nu-1)(N+\nu-2)\cdots(N+\nu-k)\cdot {N\choose k}    \quad (k\le N) .$$
\end{lemma}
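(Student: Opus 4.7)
The plan is to mimic Lemma \ref{symmetric-pol-pol-in-t}, but now working with the squared coordinates $y_i(t):=x_i(t)^2$, since $\tilde e_k(x(t))=e_k^N(y(t))$. Using the identity $\frac{1}{x_i-x_j}+\frac{1}{x_i+x_j}=\frac{2x_i}{y_i-y_j}$, a short calculation converts the ODE (\ref{ODE-b}) into
$$\dot y_i(t)=\sum_{j\neq i}\frac{4\,y_i(t)}{y_i(t)-y_j(t)}+2\nu\qquad(i=1,\ldots,N),$$
so it suffices to show that $t\mapsto e_k^N(y(t))$ is a polynomial of degree $k$ with the prescribed leading coefficient.

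Differentiating along the trajectory and using $\partial_{y_i}e_k^N(y)=e_{k-1}^{N-1}(y_{[N]\setminus\{i\}})$ gives, for $k\geq1$,
$$\frac{d}{dt}e_k^N(y(t))=A(y(t))+B(y(t)),$$
with
$$A(y)=4\sum_{i\neq j}\frac{y_i\,e_{k-1}^{N-1}(y_{[N]\setminus\{i\}})}{y_i-y_j},\qquad B(y)=2\nu\sum_{i=1}^N e_{k-1}^{N-1}(y_{[N]\setminus\{i\}}).$$
The second term is immediate from the standard identity $\sum_i e_{k-1}^{N-1}(y_{[N]\setminus\{i\}})=(N-k+1)\,e_{k-1}^N(y)$, giving $B=2\nu(N-k+1)\tilde e_{k-1}$. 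For $A$, I would antisymmetrize in $(i,j)$; rewriting $y_i\,e_{k-1}^{N-1}(y_{[N]\setminus\{i\}})=e_k^N(y)-e_k^{N-1}(y_{[N]\setminus\{i\}})$ and applying (\ref{elementary-symm-a-2}) with $k$ replaced by $k+1$ reduces the antisymmetrized numerator to $(y_i-y_j)\,e_{k-1}^{N-2}(y_{[N]\setminus\{i,j\}})$. The counting argument of (\ref{elementary-symm-a-3}) then yields $A=2(N-k+1)(N-k)\tilde e_{k-1}$.

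Combining these contributions gives the clean recurrence
$$\frac{d}{dt}\tilde e_k(x(t))=2(N-k+1)(N+\nu-k)\,\tilde e_{k-1}(x(t))\qquad(k=1,\ldots,N),$$
whose case $k=1$ is consistent with Lemma \ref{growth-b}. An induction on $k$ together with integration in $t$ then shows that $\tilde e_k(x(t))$ is a polynomial of degree $k$ whose leading coefficient $a_k$ satisfies $a_0=1$ and $a_k=\frac{2(N-k+1)(N+\nu-k)}{k}\,a_{k-1}$. Telescoping this relation and applying the identity $\frac{N-k+1}{k}\binom{N}{k-1}=\binom{N}{k}$ deliver the stated value $a_k=2^k(N+\nu-1)(N+\nu-2)\cdots(N+\nu-k)\binom{N}{k}$.

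The main obstacle is the antisymmetrization step for $A$; the key combinatorial input is the identity $y_iP_i-y_jP_j=(y_i-y_j)\,e_{k-1}^{N-2}(y_{[N]\setminus\{i,j\}})$ where $P_i:=e_{k-1}^{N-1}(y_{[N]\setminus\{i\}})$. Once this is established, the argument reduces to the same bookkeeping as in the $A$-case proof, and the $B$ term contributes only the extra $\nu$-shift in the recurrence.
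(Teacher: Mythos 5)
Your proposal is correct and follows essentially the same route as the paper: pass to the squared variables, antisymmetrize the singular sum, reduce the numerator to $(y_i-y_j)e_{k-1}^{N-2}(y_{[N]\setminus\{i,j\}})$, apply the two counting identities to obtain the recurrence $\frac{d}{dt}\tilde e_k=2(N-k+1)(N+\nu-k)\tilde e_{k-1}$, and induct. The only (harmless) difference is that you derive the key product-difference identity from the type-$A$ identity via $y_ie_{k-1}^{N-1}(y_{[N]\setminus\{i\}})=e_k^N(y)-e_k^{N-1}(y_{[N]\setminus\{i\}})$, where the paper verifies it directly; your convention $e_m^R=0$ for $m>R$ also absorbs the paper's separate case $k=N$.
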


\begin{proof}
The statement is trivial for $k=0$ and follows from (\ref{ODE-norm-b}) for $k=1$.
 For $k\ge2$ we use induction on $k$. We  use the notations  $x_S(t)$ from Section 2 and
 put $\tilde e_k^R(x):= e_k^R(x_1^2,\ldots, x_R^2)$ for $R=1,\ldots,N$.
 Then for $k\ge2$,
$$\frac{d}{dt}\tilde e_k(x(t))=2 \sum_{j=1}^N \frac{dx_j(t)}{dt} \cdot x_j(t)
 \cdot\tilde e_{k-1}^{N-1}(x_{\{1,\ldots,N\}\setminus\{j\}}(t)).$$
Therefore, by (\ref{ODE-b}),
\begin{align}\label{elementary-symm-b-1}
&\frac{d}{dt}\tilde e_k(x(t))= 2
 \sum_{j=1}^N \Biggl(2\sum_{i: i\ne j}\frac{x_j(t)^2}{x_j(t)^2-x_i(t)^2} \> +\> \nu\Biggr)
 \tilde e_{k-1}^{N-1}(x_{\{1,\ldots,N\}\setminus\{j\}}(t))\notag\\
&= 2    \sum_{i,j=1,\ldots,N; i\ne j}
\frac{x_j(t)^2 \tilde e_{k-1}^{N-1}(x_{\{1,\ldots,N\}\setminus\{j\}}(t))-x_i(t)^2\tilde e_{k-1}^{N-1}(x_{\{1,\ldots,N\}\setminus\{i\}}(t))  }{x_j(t)^2-x_i(t)^2}\notag \\
&\quad\quad +2\nu  \sum_{j=1}^N \tilde e_{k-1}^{N-1}(x_{\{1,\ldots,N\}\setminus\{j\}}(t))
.
\end{align}
Simple combinatorial computations show that for  $k\le N-1$,
\begin{eqnarray}\label{elementary-symm-b-2}
\lefteqn{x_j(t)^2 \tilde  e_{k-1}^{N-1}(x_{\{1,\ldots,N\}\setminus\{j\}}(t))-x_i(t)^2\tilde e_{k-1}^{N-1}(x_{\{1,\ldots,N\}\setminus\{i\}}(t))=}\\
 &&\quad\quad\quad\quad\quad\quad\quad\quad\quad\quad(x_j(t)^2-x_i(t)^2) \tilde e_{k-1}^{N-2}(x_{\{1,\ldots,N\}\setminus\{i,j\}}(t))\nonumber
\end{eqnarray}
and
\begin{equation}\label{elementary-symm-b-4}
 \sum_{i,j=1,\ldots,N; i\ne j}\tilde e_{k-1}^{N-2}(x_{\{1,\ldots,N\}\setminus\{i,j\}}(t))=
 (N-k+1)(N-k) \tilde e_{k-1}^{N}(x(t)).
\end{equation}
Moreover,
\begin{equation}\label{elementary-symm-b-3}
x_j(t)^2 \tilde  e_{N-1}^{N-1}(x_{\{1,\ldots,N\}\setminus\{j\}}(t))-x_i^2\tilde e_{N-1}^{N-1}(x_{\{1,\ldots,N\}\setminus\{i\}}(t))=0.
\end{equation}
Furthermore,
\begin{equation}\label{elementary-symm-b-5}
 \sum_{j=1}^N \tilde e_{k-1}^{N-1}(x_{\{1,\ldots,N\}\setminus\{j\}}(t))=(N-k+1)\tilde  e_{k-1}^{N}(x(t)).
\end{equation}
Therefore, by (\ref{elementary-symm-b-1})-(\ref{elementary-symm-b-5}), for $k\le N$,
\begin{equation}\label{elementary-symm-b-6}
\frac{d}{dt}\tilde e_k(x(t))=  2 (N-k+1)(N-k+\nu)\tilde e_{k-1}^{N}(x(t)).
\end{equation}
This recurrence relation and the known cases $k=0,1$ lead easily to the claim.
\end{proof}

\begin{example} Let $\nu>0$ and $N=2$. Assume that we start in $x_0=(x_{0,1},x_{0,2})\in C_2^{B}$. Then, 
 (\ref{elementary-symm-b-6}) and Lemma \ref{growth-b}  imply that
$$(z-x_1(t)^2)(z-x_2(t)^2)= z^2 -(x_1(t)^2+x_2(t)^2)z+x_1(t)^2x_2(t)^2$$
with 
$$x_1(t)^2+x_2(t)^2=4(1+\nu)t+\|x_0\|^2; \quad
x_1(t)^2x_2(t)^2= 4\nu(1+\nu)t^2+ 2\nu\|x_0\|^2t+ x_{0,1}^2x_{0,2}^2.$$
  Since the components of $x$ are non-negative, this yields that
\begin{align}
x_1(t)=&\left(\frac{1}{2}\Biggl(4(1+\nu)t+\|x_0\|^2+ 
\sqrt{ 16(1+\nu)t^2+8\|x_0|^2t+(x_{0,1}^2-x_{0,2}^2)^2}\Biggr)\right)^{1/2},\notag\\
x_2(t)=&\left(\frac{1}{2}\Biggl(4(1+\nu)t+\|x_0\|^2-
 \sqrt{ 16(1+\nu)t^2+8\|x_0\|^2t+(x_{0,1}^2-x_{0,2}^2)^2}\Biggr)\right)^{1/2}\notag
\end{align}
This implies in particular that for $t\to\infty$, 
\begin{align}
x_1(t)=&\left(\frac{1}{2}(4(1+\nu)t+\|x_0\|^2)\right)^{1/2}\left(1+\sqrt{\frac{1}{1+\nu}+
\frac{\frac{\nu}{1+\nu}\|x_0\|^4-4x_{1,0}^2x_{2,0}^2}{(4(1+\nu)t+\|x_0\|^2)^2}}\right)^{1/2}     \notag\\
=&\left(\frac{1}{2}(4(1+\nu)t+\|x\|^2)\right)^{1/2}\left(1+\sqrt{\frac{1}{1+\nu}}\right)^{1/2}+O(t^{-1/2}) \notag
\end{align}
and in the same way
\begin{equation}
x_2(t)=\left(\frac{1}{2}(4(1+\nu)t+\|x_0\|^2)\right)^{1/2}
\left(1-\sqrt{\frac{1}{1+\nu}}\right)^{1/2}+O(t^{-1/2}), 
\end{equation}
which may be seen as a quantitative version of Lemma \ref{stabilily-lemma-b} for $N=2$.
We also observe that our solutions  $x$  exist when we start
 at any point $x_0\in\partial C_2^{B}$ 
and that for these solutions,  $x(t)\in W_2^{B}$ holds for all $t>0$.
\end{example}

This last observation holds for all $N\ge2$:

\begin{theorem}\label{ode-ex-unique-b-thm}
Let $N\ge 2$ and $\nu>0$. 
Then, for each starting point $x_0\in C_N^B$, the ODE  (\ref{ODE-b})
has a unique solution for all $t\ge0$ in the sense of Theorem \ref{ode-ex-unique-a-thm}.
\end{theorem}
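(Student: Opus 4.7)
The plan is to mimic the strategy used for Theorem \ref{ode-ex-unique-a-thm}, replacing the $A$-type symmetric polynomial map $e$ by
$$\tilde e: C_N^B \to \mathbb R^N,\qquad x \mapsto (\tilde e_1(x),\ldots,\tilde e_N(x)),$$
and replacing the $A$-type discriminant by a $B$-type analogue. By Lemma \ref{deterministic-boundary-B} together with Lemma \ref{growth-b}, the claim is already settled for $x_0 \in W_N^B$, so only boundary starting points $x_0 \in \partial C_N^B$ remain.

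First I would verify that $\tilde e$ is a homeomorphism from $C_N^B$ onto $\tilde e(C_N^B) = \overline{\tilde e(W_N^B)}$ which restricts to a diffeomorphism $W_N^B \to \tilde e(W_N^B)$: injectivity holds because the squares $x_1^2\ge\ldots\ge x_N^2\ge 0$ are uniquely recovered as the ordered real zeros of $\sum_{j=0}^N (-1)^{N-j}\tilde e_{N-j}(x) z^j$ (with $\tilde e_0=1$) and then $x_i=\sqrt{x_i^2}$; smoothness of the inverse in the interior is inherited from the $A$-case applied to the squared coordinates. By Lemma \ref{symmetric-pol-pol-in-t-b}, for any solution $x$ of (\ref{ODE-b}) the image $\tilde e(x(\cdot))$ satisfies the constant-coefficient, triangular linear system (\ref{elementary-symm-b-6}), which admits a unique global polynomial solution for every initial value in $\mathbb R^N$ and depends continuously on that initial value.

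Next I would introduce the $B$-type discriminant-like polynomial
$$D^B(x) := \prod_{1\le i<j\le N}(x_i^2-x_j^2)^2\cdot\prod_{i=1}^N x_i^2,$$
which is symmetric in $x_1^2,\ldots,x_N^2$ and hence equals some polynomial $\tilde D^B$ in $\tilde e_1(x),\ldots,\tilde e_N(x)$; its zero set in $\tilde e(C_N^B)$ is exactly $\tilde e(\partial C_N^B)$. Assume for contradiction that there exist $x_0\in\partial C_N^B$ and $t_0>0$ with $\tilde e(t)\notin \tilde e(W_N^B)$ for all $t\in[0,t_0]$, where $\tilde e(t)$ is the polynomial solution of (\ref{elementary-symm-b-6}) with $\tilde e(0)=\tilde e(x_0)$. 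Then the polynomial $t\mapsto \tilde D^B(\tilde e(t))$ vanishes on $[0,t_0]$, hence identically, and so $\tilde e(t)\notin \tilde e(W_N^B)$ for every $t\ge 0$.

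To reach a contradiction I would exploit the natural scaling: since $H_\nu$ is homogeneous of degree $-1$, if $x$ solves (\ref{ODE-b}) then so does $t\mapsto c^{-1/2} x(ct)$ for every $c>0$, and correspondingly
$$\tilde e_c(t) := \bigl(c^{-1}\tilde e_1(x(ct)),\, c^{-2}\tilde e_2(x(ct)),\,\ldots,\, c^{-N}\tilde e_N(x(ct))\bigr)$$
solves (\ref{elementary-symm-b-6}). Thus $\tilde e_c(t)\notin \tilde e(W_N^B)$ for all $t\ge 0$ and all $c>0$, while $\tilde e_c(0)\to 0$ as $c\to\infty$. On the other hand, Lemma \ref{special-solution-B1} (formally with $c=0$) produces $\tilde x(t):=\sqrt{t}\cdot y\in W_N^B$ for all $t>0$ starting from $0\in\partial C_N^B$, whose image under $\tilde e$ is the unique polynomial solution of (\ref{elementary-symm-b-6}) with initial value $0$ and satisfies $\tilde e(\tilde x(t))\in \tilde e(W_N^B)$ for every $t>0$. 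Continuous dependence on initial data for the linear system (\ref{elementary-symm-b-6}) then forces $\tilde e_c(t)\to \tilde e(\tilde x(t))\in \tilde e(W_N^B)$ at every fixed $t>0$ as $c\to\infty$, which is the desired contradiction. The main subtlety I expect is the careful verification that $\tilde e$ is a homeomorphism with $\tilde e(\partial C_N^B) = \tilde e(C_N^B)\cap\{\tilde D^B=0\}$; once this is in place, the rest of the argument runs in parallel with the $A$-case.
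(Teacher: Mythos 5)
Your proposal is correct and follows the paper's overall strategy: linearize the flow via the squared elementary symmetric polynomials $\tilde e_k$, get uniqueness from injectivity of $\tilde e$ on $C_N^B$, and for boundary starting points derive a contradiction from the polynomial nature of the discriminant along the solution, the scaling $t\mapsto c^{-1/2}x(ct)$, and continuous dependence on initial data combined with the explicit solution $\sqrt{t}\cdot y$ emanating from the origin. The one genuine difference is how you show that being trapped in $\overline{\tilde e(W_N^B)}\setminus \tilde e(W_N^B)$ on $[0,t_0]$ propagates to all $t\ge 0$: you use the single augmented polynomial $D^B(x)=\prod_{i<j}(x_i^2-x_j^2)^2\prod_i x_i^2$, whose zero set in $C_N^B$ is exactly $\partial C_N^B$, so one polynomial-vanishing argument covers the entire boundary. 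The paper instead splits $\partial C_N^B$ into the part where trailing coordinates vanish and the part where coordinates collide; the first part is eliminated directly from the recursion $\frac{d}{dt}\tilde e_k = 2(N-k+1)(N-k+\nu)\tilde e_{k-1}$ and a Taylor expansion (here $\nu>0$ forces $\tilde e_N(x(t))>0$ for small $t>0$), and only the collision part is fed into the $A$-type discriminant argument. Your version is shorter and avoids the case distinction; the paper's version makes the role of $\nu>0$ more visible, whereas in your argument $\nu>0$ enters only through Lemma \ref{special-solution-B1}, which guarantees that the limiting solution from $0$ lies in $W_N^B$ --- exactly the point that fails for $\nu=0$, cf.\ Corollary \ref{ode-ex-unique-b-thm-nu-null}. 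One small step you should make explicit: the inference from $\tilde e(t)\notin\tilde e(W_N^B)$ to $\tilde D^B(\tilde e(t))=0$ presupposes $\tilde e(t)\in\overline{\tilde e(W_N^B)}$ for all $t\ge0$, which, as in the $A$-case, must first be established from continuous dependence and the invariance of $\tilde e(W_N^B)$ under the linearized flow.
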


\begin{proof} 
The proof is similar to that of Theorem \ref{ode-ex-unique-a-thm}.
  We  keep the notations from there
and  describe the modifications.
 We again transform solutions $x$ of
the ODE  (\ref{ODE-b}). 
 Using the notation $\tilde e_k^R(x):= e_k^R(x_1^2,\ldots, x_R^2)$  as above, we
conclude from the proof of Lemma \ref{symmetric-pol-pol-in-t-b} that
for $t>0$ and $k=2,\ldots,N$, 
\begin{equation}\label{elementary-symm-b-1-k}
\frac{d}{dt}\Bigl(\tilde e_k^N(x(t))\Bigr)=2 (N-k+1)(N-k+\nu)\tilde e_{k-1}^{N}(x(t)).
\end{equation}
Moreover, for $k=1$,
\begin{equation}\label{symm-eq-b-1}
\frac{d}{dt}\Bigl(\tilde e_1^{N}(x(t))\Bigr)= 2N(N+\nu-1).
\end{equation}
 (\ref{symm-eq-b-1}) and (\ref{elementary-symm-b-1-k}) form an ODE, whose
 solutions $\tilde e(t):=(\tilde e_1^{N}(x(t)),\ldots, \tilde e_N^{N}(x(t)))$ 
are polynomials in $t$ where for $k=1,\ldots,N$, the $k$-th component  $\tilde e_k(t)$ 
is a polynomial  of maximal order $k$.
As the mapping $e: C_N^B\to E_N $ with $E_N$ as in  (\ref{special-mapping}) and 
$ e(x):=(\tilde e_1^{N}(x),\ldots, \tilde e_N^{N}(x)$ 
is continuous and injective,  (\ref{ODE-b})
has at most one solution.

For the existence of a solution we proceed as in the proof of  Theorem \ref{ode-ex-unique-a-thm}
and show that
for any starting point $x_0\in\partial C_N^B$  and its image
$\tilde e(x_0)\in \overline{e( W_N^B)} $,
 the  solution $\tilde e(t)$ ($t\ge0)$ of the ODEs
 (\ref{symm-eq-b-1}) and (\ref{elementary-symm-b-1-k}) with $\tilde e(0)=e(x_0)$ satisfies
$\tilde e(t)\in e(  W_N^B) $  for  $t>0$ sufficiently small.
To prove this, we first observe that by the same reasons
 as in the proof of  Theorem \ref{ode-ex-unique-a-thm},
 for each starting point in $\tilde e(0)\in\overline{e(W_N^B)}$ we have
 $\tilde e(t)\in\overline{e(W_N^B) }$ for all $t\ge0$.
Moreover, for each prospective solution $(x(t))_{t\ge0}$ of  (\ref{ODE-b}) with start in 
$x(0)\in \partial C_N^A$, and for each $c>0$, the function $t\mapsto \frac{1}{\sqrt c} x(ct)$  also  is also a 
 prospective solution of   (\ref{ODE-b}) with  start in $x(0)/\sqrt c$. This observation corresponds with the fact
that for each solution
 $(\tilde e(t))_{t\ge0}$ of the ODEs  (\ref{symm-eq-b-1}) and (\ref{elementary-symm-b-1-k}) with start in
$e(x(0))$, and each $c>0$, the function 
\begin{equation}\label{modified-solution-c-b}
t\mapsto  ( c^{-1} \tilde e_1(ct),c^{-2} \tilde e_2(ct),\ldots, c^{-N} \tilde e_N(ct) )
\end{equation}
 is also a solution of 
 (\ref{symm-eq-b-1}) and (\ref{elementary-symm-b-1-k}) with start in
$( c^{-1}\tilde e_1(x(0)),\ldots, c^{-N}\tilde  e_N(x(0)) ).$

Assume now that
there is a starting point $x(0)\in \partial C_N^B$ and some $t_0>0$, such that the solution  $(\tilde e(t))_{t\ge0}$
 of (\ref{symm-eq-b-1}) and (\ref{elementary-symm-b-1-k}) with start in $\tilde e(x(0))$ satisfies
$\tilde e(t)\not\in e(W_N^B )$ for $t\in[0,t_0]$.
We notice that $\partial C_N^B$ consists of two (overlapping) parts, namely points $x$ with $x_N=0$ and
points, where at least two coordinates are equal.
Assume now that $x(0)=(x(0)_1,\ldots,x(0)_N) $ satisfies $x(0)_N=\ldots=x(0)_{N-l+1}=0$ and 
 $x(0)_{N-l}>0$ with some $l=1,\ldots,N$. Then $\tilde e_N(x(0))=\ldots=\tilde e_{N-l+1}(x(0))=0$ and 
$\tilde e_{N-l}(x(0))>0$ by the form of the elementary symmetric polynomials. This, $\nu>0$, 
(\ref{elementary-symm-b-1-k}), and the Taylor expansion  now imply that $\tilde e_N(x(t))>0$ for  $t\in]0,t_0]$ and
a suitable  $t_0>0$. This means that our assumption
$\tilde e(t)\not\in e(W_N^B )$ for $t\in[0,t_0]$ is caused by the fact that
 at least two coordinates of $x(t)$ are equal for  $t\in[0,t_0]$.
>From this we conclude as in the proof of Theorem \ref{ode-ex-unique-a-thm}
 that 
$\tilde e(t)\not\in e(W_N^B)$ holds for all $t\ge0$.
With this observation the proof can be completed precisely as  the proof of Theorem \ref{ode-ex-unique-a-thm} where one has to use
the fact that  the solution of  (\ref{ODE-b}) with start in $0$ is given by
$x(t):=\sqrt{2t}\cdot y$ for the vector $y$ from Lemma \ref{special-solution-B1}.
\end{proof}

A slightly more complicated variant of Theorem \ref{ode-ex-unique-b-thm} can be stated for the case $\nu=0$ under some restriction.
This result will be a consequence of the study of the root system $D_N$ in the end of the next section.
We finally observe that 
Theorem \ref{ode-ex-unique-b-thm} can be supplemented by an analogue of Lemma 
\ref{lower-boundary-a} with the same proof.

\section{The root system $D_N$}

For the root systems $D_N$, we consider the associated Weyl chamber
$$C_N^D:=\{x\in\mathbb R^N: \quad x_1\ge \ldots\ge x_{N-1}\ge |x_N|\}\subset \mathbb R^N$$
as well as its interior $ W_N^B$. The ODE (\ref{ODE-general})  has in this case the form
\begin{equation}\label{ODE-d}
\frac{dx(t)}{dt}= H_D(x(t)) \quad\text{with}
 \quad H_D(x):=\left(\begin{matrix}\sum_{j\ne1}\Bigl( \frac{1}{x_1-x_j}  +
 \frac{1}{x_1+x_j}\Bigr)\\ \vdots\\ \sum_{j\ne N}
 \Bigl(\frac{1}{x_N-x_j}+   \frac{1}{x_N+x_j}\Bigr)\end{matrix}\right)
\end{equation}
on  $ W_N^B$. Similar to the preceding cases, we have by  Lemma 4.1 of \cite{AV1}:

\begin{lemma}\label{deterministic-boundary-D}
 For $\epsilon>0$ consider the open subset
 $U_\epsilon:=\{x\in C_N^D:\> d(x,\partial C_N^D)>\epsilon\}$.
of $C_N^D$. Then
 for each starting point $x_0\in U_\epsilon$, the ODE (\ref{ODE-d}) with $x(0)=x_0$ admits a unique solution. 
This solution exists for all $t>0$  with $x(t)\in U_\epsilon$.
\end{lemma}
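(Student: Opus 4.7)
The plan is to adapt the proofs of Lemmas \ref{deterministic-boundary-A} and \ref{deterministic-boundary-B} to the $D$-setting; this is essentially Lemma 4.1 of \cite{AV1}. The first step is to check regularity: $H_D$ is a rational vector field whose poles lie exactly on the hyperplanes $\{x_i=x_j\}$ and $\{x_i+x_j=0\}$, i.e.\ on the reflection hyperplanes of $D_N$, which together constitute $\partial C_N^D$. Hence $H_D$ is smooth on $W_N^D$, and on each $U_\epsilon$ every denominator $|x_i\pm x_j|$ is bounded below by a geometric multiple of $\epsilon$, so $H_D$ is Lipschitz on bounded subsets of $U_\epsilon$. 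Picard--Lindel\"of then yields a unique maximal solution $x:[0,T^*)\to W_N^D$.

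The second step is to obtain a priori bounds that rule out $T^*<\infty$. A telescoping calculation based on the identity $\tfrac{x_i}{x_i-x_j}+\tfrac{x_j}{x_j-x_i}=1$ and its $+$-analog gives $\tfrac{d}{dt}\|x(t)\|^2=2N(N-1)$, so $\|x(t)\|^2=\|x_0\|^2+2N(N-1)t$ is bounded on finite time intervals. Moreover (\ref{ODE-d}) has the gradient structure $\dot x=\nabla\Phi(x)$ with
\[
\Phi(x)=\sum_{1\le i<j\le N}\bigl(\ln(x_i-x_j)+\ln(x_i+x_j)\bigr),
\]
so $\tfrac{d}{dt}\Phi(x(t))=\|H_D(x(t))\|^2\ge 0$ and hence $\Phi(x(t))\ge\Phi(x_0)$. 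Since each factor $x_i\pm x_j$ is bounded above by $2\|x(t)\|$, this lower bound on $\Phi$ forces every individual factor to stay bounded below by a positive constant on any finite time interval, so $x(t)$ remains in a compact subset of $W_N^D$; the maximal solution extends to $[0,\infty)$.

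The main obstacle is the precise invariance claim $x(t)\in U_\epsilon$ with the \emph{same} $\epsilon$. The Lyapunov control of $\Phi$ only yields confinement to \emph{some} $U_{\epsilon'(t)}$ with $\epsilon'(t)>0$, which is strictly weaker. To close the gap, I would use a Nagumo-type tangent-cone argument on $\partial U_\epsilon\cap W_N^D$: at a point where the Euclidean distance to $\partial C_N^D$ is exactly $\epsilon$, the singular component of $H_D$ attached to the nearest reflection hyperplane points strictly inward, and one has to verify that this inward contribution dominates the sum of the remaining bounded terms (uniformly in the location within the relevant stratum of $\partial U_\epsilon$). This is precisely the step carried out in Lemma 4.1 of \cite{AV1}, to which we appeal for the full argument.
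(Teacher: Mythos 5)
Your proposal is correct and ends up in the same place as the paper, which gives no proof of this lemma at all but simply cites Lemma 4.1 of \cite{AV1}; your sketch of the routine parts (Lipschitz bounds on $U_\epsilon$, the norm identity, and the gradient/Lyapunov argument for global existence in $W_N^D$) is sound, and you correctly isolate the forward-invariance of $U_\epsilon$ itself as the one step that genuinely requires the quantitative comparison carried out in \cite{AV1}.
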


We next recapitulate some facts on  Laguerre polynomials and
 proceed as in Section 4 of \cite{AV1}.
Using the  representation
$$L_N^{(\alpha)}(x):=
\sum_{k=0}^N { N+\alpha\choose N-k}\frac{(-x)^k}{k!} \quad\quad(\alpha\in\mathbb R, \> N\in\mathbb N)$$
of the Laguerre polynomials according to (5.1.6) of Szeg\"o \cite{S}, we  form the polynomial 
 $L_N^{(-1)}$ of order $N\ge1$
where,
 by (5.2.1) of \cite{S}, 
\begin{equation}\label{laguerre-1}
L_N^{(-1)}(x)=-\frac{x}{N}L_{N-1}^{(1)}(x).
\end{equation}
We denote the $N$ ordered zeros of  $L_N^{(-1)}$ by  $z_1>\ldots>z_{N-1}>z_N=0$.
Similar to the preceding cases, we obtain from Section 4 of \cite{AV1}:

\begin{lemma}\label{special-solution-D}
Let $y\in C_N^D$ be the vector with
\begin{equation}\label{y-max-D}
2\cdot (z_1,\ldots, z_{N-1},z_N=0)= (y_1^2,\ldots,y_N^2).
\end{equation}
 Then for each $c>0$,  $x(t):=\sqrt{t+c^2}\cdot y $ is a solution of (\ref{ODE-d}). 
\end{lemma}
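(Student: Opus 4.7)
The plan is to substitute the ansatz $x(t)=\sqrt{t+c^2}\,y$ directly into \eqref{ODE-d} and reduce the problem to a time-independent identity on $y$. Since $\dot x_i(t)=y_i/(2\sqrt{t+c^2})$ and
$$H_D(x(t))_i=\frac{1}{\sqrt{t+c^2}}\sum_{j\ne i}\Bigl(\frac{1}{y_i-y_j}+\frac{1}{y_i+y_j}\Bigr)=\frac{1}{\sqrt{t+c^2}}\sum_{j\ne i}\frac{2y_i}{y_i^2-y_j^2},$$
the $\sqrt{t+c^2}$-factor cancels and \eqref{ODE-d} collapses to the scalar equations
$$\frac{y_i}{2}=\sum_{j\ne i}\frac{2y_i}{y_i^2-y_j^2}\qquad(i=1,\ldots,N).$$

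For the coordinate $i=N$ both sides vanish: $y_N=0$ makes the left-hand side zero, and each summand $\frac{1}{y_N-y_j}+\frac{1}{y_N+y_j}=-\frac{1}{y_j}+\frac{1}{y_j}=0$ on the right. For $i<N$ we have $y_i>0$, so I may divide by $y_i$ and substitute $y_i^2/2=z_i$; the identity to prove becomes
$$\sum_{\substack{j=1,\ldots,N\\ j\ne i}}\frac{1}{z_i-z_j}=\frac{1}{2}\qquad(i=1,\ldots,N-1),$$
where the sum includes $j=N$ with $z_N=0$.

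To verify this I exploit \eqref{laguerre-1}: the zeros $z_1>\cdots>z_{N-1}$ coincide with those of $L_{N-1}^{(1)}$, which satisfies the Laguerre ODE $xy''+(2-x)y'+(N-1)y=0$. Applying this at a zero $z_i$ of $L_{N-1}^{(1)}$ and using the standard logarithmic-derivative formula $p''(z_i)/p'(z_i)=2\sum_{j\ne i}(z_i-z_j)^{-1}$, which is valid for any polynomial $p$ with simple real zeros, yields
$$\sum_{\substack{j=1,\ldots,N-1\\ j\ne i}}\frac{1}{z_i-z_j}=\frac{z_i-2}{2z_i}=\frac{1}{2}-\frac{1}{z_i}.$$
Adding the missing contribution $\frac{1}{z_i-z_N}=\frac{1}{z_i}$ from the extra zero at the origin recovers exactly $\frac{1}{2}$.

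The computation is largely routine substitution; the only genuine subtlety is the bookkeeping around the extra zero $z_N=0$ produced by the factor of $x$ in \eqref{laguerre-1}. The cancellation between the correction term $+1/z_i$ (from $j=N$) and the term $-1/z_i$ (coming from the $\alpha=1$ Laguerre identity) is what makes the ansatz work, and is the only place where the specific normalization $\alpha=-1$ (rather than $\alpha=\nu-1$ as in the $B_N$-case of Lemma~\ref{special-solution-B1}) enters.
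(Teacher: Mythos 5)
Your proposal is correct. The paper itself gives no proof of this lemma -- it is simply recapitulated from Section~4 of [AV1] -- so your argument fills in the omitted verification. Your route is the standard one for such statements: reduce the ansatz to the time-independent system $\sum_{j\ne i}(z_i-z_j)^{-1}=\tfrac12$ for the points $z_i=y_i^2/2$, and verify it via the Stieltjes relation $p''(z_i)/p'(z_i)=2\sum_{j\ne i}(z_i-z_j)^{-1}$ combined with the Laguerre differential equation $xy''+(2-x)y'+(N-1)y=0$ for $L_{N-1}^{(1)}$. All the individual steps check out: the cancellation at the coordinate $i=N$ (both sides vanish since $y_N=0$), the positivity of $z_1,\dots,z_{N-1}$ (so division by $y_i$ and by $z_i$ is legitimate, as $L_{N-1}^{(1)}(0)=N\ne0$), and the bookkeeping in which the $+1/z_i$ contribution of the extra zero at the origin exactly compensates the $-1/z_i$ coming from the $\alpha=1$ Laguerre equation. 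This last cancellation is indeed the crux, and it mirrors the factorization $L_N^{(-1)}(x)=-\tfrac{x}{N}L_{N-1}^{(1)}(x)$ in \eqref{laguerre-1}; it is the $D_N$-analogue of the electrostatic characterization of Laguerre zeros used in the $B_N$ case (Lemma~\ref{special-solution-B1}). One cosmetic remark: the trajectory $x(t)=\sqrt{t+c^2}\,y$ has $x_N(t)\equiv0$ and therefore lies in $\partial C_N^D$ rather than in the open chamber, but $H_D$ is still well defined there since all the denominators $x_i\pm x_j$ ($i\ne j$) are nonzero; this is consistent with Lemma~\ref{special-lemma-d} and worth a sentence if you want the statement "is a solution of \eqref{ODE-d}" to be airtight.
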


Again, the growth of these particular solutions is typical; see \cite{VW}:

\begin{lemma}\label{growth-d} For each solution $x$ of  (\ref{ODE-d}) with start in  $x(0)\in W_N^D$,
\begin{equation}\label{ODE-norm-d}
\|x(t)\|^2 =2N(N-1)t+\|x(0)\|^2.
\end{equation}
\end{lemma}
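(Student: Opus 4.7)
The plan is a short direct computation of $\frac{d}{dt}\|x(t)\|^2$ analogous to the arguments used for $A$ and $B$. First I would differentiate the squared norm along the flow and substitute the ODE $(\ref{ODE-d})$ to obtain
\begin{equation*}
\frac{d}{dt}\|x(t)\|^2 = 2\sum_{i=1}^N x_i(t)\,\dot x_i(t)
= 2\sum_{i=1}^N x_i(t)\sum_{j\ne i}\left(\frac{1}{x_i(t)-x_j(t)}+\frac{1}{x_i(t)+x_j(t)}\right).
\end{equation*}

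Next I would combine the two simple fractions using
$\tfrac{1}{x_i-x_j}+\tfrac{1}{x_i+x_j}=\tfrac{2x_i}{x_i^2-x_j^2}$,
so that after pulling the remaining $x_i$ inside, the right-hand side equals
$4\sum_{i\ne j}\tfrac{x_i(t)^2}{x_i(t)^2-x_j(t)^2}$.
Here the key trick is to symmetrize the double sum in the ordered pair $(i,j)$: swapping $i\leftrightarrow j$ in half of the terms and averaging yields
\begin{equation*}
\sum_{i\ne j}\frac{x_i^2}{x_i^2-x_j^2}
=\frac{1}{2}\sum_{i\ne j}\left(\frac{x_i^2}{x_i^2-x_j^2}+\frac{x_j^2}{x_j^2-x_i^2}\right)
=\frac{1}{2}\sum_{i\ne j}1=\frac{N(N-1)}{2}.
\end{equation*}
This cancellation is the only step that requires care; it uses crucially that $x(t)\in W_N^D$ so that $x_i(t)^2\ne x_j(t)^2$ for $i\ne j$, making every fraction well defined.

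Putting the two computations together gives $\frac{d}{dt}\|x(t)\|^2=2N(N-1)$, a constant, and integrating from $0$ to $t$ yields the claimed identity $\|x(t)\|^2=2N(N-1)t+\|x(0)\|^2$. I would note that this is the $D_N$ analogue of Lemma \ref{growth-a} and that the contribution of the $\tfrac{\nu}{x_i}$ terms which appeared in the $B_N$ computation is simply absent here (corresponding to $\nu=0$), which is why the constant is $2N(N-1)$ rather than $2N(N+\nu-1)$. No obstacle is expected; the whole argument is a two-line symmetrization identity.
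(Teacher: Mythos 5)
Your computation is correct: the partial-fraction identity $\frac{1}{x_i-x_j}+\frac{1}{x_i+x_j}=\frac{2x_i}{x_i^2-x_j^2}$ followed by the antisymmetrization of $\sum_{i\ne j}\frac{x_i^2}{x_i^2-x_j^2}$ gives $\frac{d}{dt}\|x(t)\|^2=2N(N-1)$, and all denominators are nonzero on $W_N^D$ as you note. The paper does not spell out a proof of Lemma \ref{growth-d} (it only cites \cite{VW}), but your argument is exactly the standard computation behind that reference and its analogues for the $A$ and $B$ cases.
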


We again have a stability result for the special solutions in Lemma \ref{special-solution-D}.
Its proof is completely analog to that of Lemma
\ref{stabilily-lemma-b}; we  omit the proof.

\begin{lemma}\label{stabilily-lemma-d}  
For each  $x_0\in W_N^D$, the solution $x$ of (\ref{ODE-d})   has the form
$$x(t)= \sqrt{ 2N(N-1)t+\|x_0\|^2}\cdot \phi(t)  \quad\quad(t\ge0)$$
where  $\phi$ satisfies
$$\|\phi(t)\|=1 \quad\quad\text{and}\quad\quad \lim_{t\to\infty} \phi(t) = \frac{2}{N(N-1)} y $$
with the vector $y$ from Lemma \ref{special-solution-D}.
\end{lemma}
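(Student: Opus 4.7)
The plan is to mirror the argument for Lemma \ref{stabilily-lemma-b}, adapted to the $D_N$-setting by deleting the $\nu$-contributions everywhere. First, using Lemma \ref{growth-d}, I define the normalized trajectory
$$\phi(t):=\frac{x(t)}{\sqrt{2N(N-1)t+\|x_0\|^2}}=\frac{x(t)}{\|x(t)\|},$$
which automatically satisfies $\|\phi(t)\|=1$. Substituting into (\ref{ODE-d}) and using the explicit formula for $\|x(t)\|^2$ from Lemma \ref{growth-d}, a direct computation gives
\begin{equation*}
\dot\phi_i(t)=\frac{1}{2N(N-1)t+\|x_0\|^2}\Biggl[\sum_{j\ne i}\Bigl(\tfrac{1}{\phi_i(t)-\phi_j(t)}+\tfrac{1}{\phi_i(t)+\phi_j(t)}\Bigr)-N(N-1)\phi_i(t)\Biggr],
\end{equation*}
which is the $D_N$-analogue of the corresponding intermediate equation in the proof of Lemma \ref{stabilily-lemma-b}.

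Next, I introduce the time change $\psi(t):=\phi\bigl(N(N-1)t^2+\|x_0\|^2 t\bigr)$ exactly as in the B-case in order to absorb the time-dependent prefactor. The resulting autonomous system is the gradient flow $\dot\psi=(\nabla u)(\psi)$ for the potential
$$u(y):=\sum_{1\le i<j\le N}\bigl(\ln(y_i-y_j)+\ln(y_i+y_j)\bigr)-\tfrac{N(N-1)}{2}\|y\|^2$$
on the interior $W_N^D$, with initial condition $\psi(0)=x_0/\|x_0\|$.

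The third step is to identify a unique asymptotically stable equilibrium. By Lemma 4.2 of \cite{AV1} (cf.~Section 6.7 of \cite{S}), $u$ admits a unique local maximum on $C_N^D$; this maximum is therefore global and is located at the point $y^\ast\in C_N^D$ determined up to normalization by the Laguerre data of Lemma \ref{special-solution-D}. The identity $L_N^{(-1)}(x)=-\tfrac{x}{N}L_{N-1}^{(1)}(x)$ together with the classical sum-of-zeros formula $\sum_{i=1}^{N-1} z_i=N(N-1)$ for $L_{N-1}^{(1)}$ allows one to verify that $y^\ast$ coincides (after the appropriate scaling) with the vector $y$ appearing in the lemma statement and has unit norm. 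The standard theorem on gradient systems (Section 9.4 of \cite{HS}) then guarantees that $y^\ast$ is asymptotically stable for the $\psi$-flow, and since the reparametrization $t\mapsto N(N-1)t^2+\|x_0\|^2 t$ is strictly increasing to $\infty$, the convergence $\psi(t)\to y^\ast$ transports back to $\phi(t)\to y^\ast$, giving the claim.

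The principal subtlety compared with the B-case is that $C_N^D$ permits $y_N<0$, so the potential $u$ could in principle have two symmetric critical points related by $y_N\mapsto-y_N$. This is resolved by restricting to the connected component of $W_N^D$ containing the initial direction $x_0/\|x_0\|$: the trajectory cannot cross the hyperplane $\{y_N=0\}$ without leaving $W_N^D$, and the maximum produced by the reference in \cite{AV1} lies in the same component. Apart from this component-selection step, every ingredient is structurally identical to the proof of Lemma \ref{stabilily-lemma-b}; the main obstacle is thus not the dynamics itself but the input from \cite{AV1} that $u$ has a unique critical point on $C_N^D$.
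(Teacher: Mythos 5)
Your proposal is correct and follows essentially the paper's own route: the paper omits the proof of Lemma \ref{stabilily-lemma-d} entirely, stating only that it is completely analogous to that of Lemma \ref{stabilily-lemma-b}, and your adaptation (normalization via Lemma \ref{growth-d}, the quadratic time change, the gradient system for the $\nu$-free potential $u$, uniqueness of its maximum from \cite{AV1}, and stability from Section 9.4 of \cite{HS}) is exactly that analogy. One remark: your closing ``component-selection'' paragraph addresses a non-issue and contains a slip --- $W_N^D$ is connected, and points with $y_N=0$ and $y_{N-1}>0$ lie in the \emph{interior} of $C_N^D$, so crossing the hyperplane $\{y_N=0\}$ does not mean leaving $W_N^D$; moreover the unique maximizer $y^\ast$ itself satisfies $y^\ast_N=0$ (since $z_N=0$ is a zero of $L_N^{(-1)}$) and is therefore fixed by the reflection $y_N\mapsto -y_N$, so no symmetric pair of critical points can occur.
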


Beside the particular solutions in  Lemma \ref{special-solution-D} 
we  here have the following  observation. This result fits with Eq.~(\ref{laguerre-1})
for $L_N^{(-1)}$.

\begin{lemma}\label{special-lemma-d}
Let  $x_0=(x_{0,1},\ldots,x_{0,N}) \in W_N^D$ with $x_{0,N}=0$. Then the associated solution $x$
of (\ref{ODE-d})   satisfies $x(t)_N=0$ for all $t$,
 and the first $N-1$ components
 $(x(t)_1,\ldots,x(t)_{N-1})$ solve the ODE  (\ref{ODE-b}) with dimension $N-1$ and  $\nu=2$.

Moreover, if $x_{0,N}>0$ or $<0$, then for all $t$,  $x(t)_N>0$ or $<0$ respectively.
\end{lemma}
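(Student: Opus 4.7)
The plan is to first verify that the coordinate hyperplane $\{x_N=0\}$ is invariant under the flow, then identify the induced equation on it with a $B_{N-1}$ ODE of parameter $\nu=2$, and finally deduce the sign preservation from uniqueness.

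First I would inspect the $N$-th component of the vector field $H_D$ at a point with $x_N=0$. Here the summands pair up as $\frac{1}{0-x_j}+\frac{1}{0+x_j}=0$, so $(H_D(x))_N\equiv 0$ on $\{x_N=0\}\cap W_N^D$. This already suggests that the hyperplane is invariant, but to turn the observation into a proof I would exhibit a genuine solution staying in it.

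Second, I would build such a candidate. Note that $x_{0,N-1}>|x_{0,N}|=0$, so $(x_{0,1},\dots,x_{0,N-1})$ lies in the interior $W_{N-1}^B$. Let $(y_1(t),\dots,y_{N-1}(t))$ denote the solution of the $B_{N-1}$ ODE (\ref{ODE-b}) with parameter $\nu=2$ and this initial value (existence/uniqueness by Lemma \ref{deterministic-boundary-B} applied with dimension $N-1$). Set $\tilde x(t):=(y_1(t),\dots,y_{N-1}(t),0)$. For $i<N$ the $j=N$ contribution to the $i$-th coordinate of $H_D(\tilde x(t))$ equals $\frac{1}{y_i-0}+\frac{1}{y_i+0}=\frac{2}{y_i}$, which is exactly the extra drift $\nu/y_i$ of the $B_{N-1}$ system with $\nu=2$; the remaining sum over $j<N,\ j\ne i$ matches the Coxeter part of $B_{N-1}$. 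For $i=N$, Step 1 tells us both sides vanish. Hence $\tilde x$ satisfies (\ref{ODE-d}), stays in $W_N^D$, and agrees with the original solution at $t=0$. By the local Lipschitz uniqueness on $W_N^D$ (Lemma \ref{deterministic-boundary-D}) we conclude $x=\tilde x$, which proves the first two assertions simultaneously.

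Third, for the sign statement, suppose $x_{0,N}>0$, so $x_0\in W_N^D$, and the solution $x(\cdot)$ stays in $W_N^D$ on its interval of existence. Consider in addition the solution $\tilde x(\cdot)$ starting from $(x_{0,1},\dots,x_{0,N-1},0)\in W_N^D$ constructed above; by the first part, $\tilde x(t)_N=0$ for all $t$. Since $H_D$ is locally Lipschitz on $W_N^D$, two solutions of (\ref{ODE-d}) with distinct initial conditions in $W_N^D$ cannot coincide at any later time, so $x(t)_N\neq 0$ for every $t$; by continuity $x(t)_N>0$ throughout. The case $x_{0,N}<0$ follows by the reflection symmetry $x_N\mapsto -x_N$, which leaves (\ref{ODE-d}) invariant.

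The main point requiring care is the bookkeeping in the second step — checking that the drift $2/y_i$ generated by the reflected root across $x_N=0$ matches precisely the multiplicity $\nu=2$ of the short root in the $B_{N-1}$ system. Once this matching is recorded, everything else reduces to a standard uniqueness argument.
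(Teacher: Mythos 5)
Your proposal is correct and follows essentially the same route as the paper: the cancellation $\frac{1}{0-x_j}+\frac{1}{0+x_j}=0$ makes the hyperplane $\{x_N=0\}$ invariant, the reflected-root term $\frac{2}{x_i}$ identifies the induced system as $B_{N-1}$ with $\nu=2$, and the sign claim follows from non-intersection of trajectories. Your write-up is in fact more careful than the paper's two-line argument, since you make the invariance rigorous by constructing the candidate solution inside the hyperplane and invoking local Lipschitz uniqueness rather than just asserting $\frac{d}{dt}x_N(t)=0$.
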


\begin{proof}
If $x_{0,N}=0$, then by (\ref{ODE-d}), $\frac{d}{dt}x(t)_N=0$. This shows the first statements.
This and the fact that the curves $(x(t))_t$ are either equal or do not intersect then show the second statement.
\end{proof}

It is possible also to derive a result which is analog to Lemma \ref{symmetric-pol-pol-in-t-b}.
We skip this statement and proceed directly to the following analogue of Theorem \ref{ode-ex-unique-b-thm}

\begin{theorem}\label{ode-ex-unique-d-thm}
Let $N\ge 2$.
Then, for each starting point $x_0\in C_N^D$, the ODE  (\ref{ODE-d})
has a unique solution for all $t\ge0$ in the sense of Theorem \ref{ode-ex-unique-a-thm}.
\end{theorem}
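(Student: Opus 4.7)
The plan is to follow the proof of Theorem~\ref{ode-ex-unique-b-thm} with the adjustments forced by the root system $D_N$. By Lemma~\ref{deterministic-boundary-D} I only need to handle $x_0\in\partial C_N^D$.

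First I would set up a $D_N$-invariant change of variables. The $B_N$-invariants $\tilde e_k(x):=e_k^N(x_1^2,\ldots,x_N^2)$ ($k=1,\ldots,N$) collapse the two $D_N$-orbits differing by the sign of $x_N$, so I augment them with the additional $D_N$-invariant $p(x):=x_1\cdots x_N$. Repeating the computation of Lemma~\ref{symmetric-pol-pol-in-t-b} with $\nu=0$ (the $\nu/x_i$ terms are absent in (\ref{ODE-d})) yields the cascading linear ODE
\[
\tfrac{d}{dt}\tilde e_1(x(t))=2N(N-1),\qquad
\tfrac{d}{dt}\tilde e_k(x(t))=2(N-k+1)(N-k)\,\tilde e_{k-1}^N(x(t))\quad(k\ge 2),
\]
so in particular $\tilde e_N(x(t))$ is conserved. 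A short direct calculation $\dot p/p=2\sum_{i\ne j}(x_i^2-x_j^2)^{-1}=0$ (antisymmetry) shows that $p$ is conserved as well. Defining $f(x):=(\tilde e_1(x),\ldots,\tilde e_{N-1}(x),p(x))$, the monic polynomial whose elementary symmetric functions in the roots are $\tilde e_1,\ldots,\tilde e_{N-1}$ together with $\tilde e_N=p^2$ determines the multiset $\{x_i^2\}$; the ordering on $C_N^D$ fixes $x_1,\ldots,x_{N-1}\ge 0$, and the sign of $x_N$ is read off $p$. Hence $f$ is continuous and injective on $C_N^D$, a homeomorphism onto $\overline{f(W_N^D)}$ and a diffeomorphism on $W_N^D$. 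As in Theorem~\ref{ode-ex-unique-b-thm}, uniqueness of $x(t)$ then follows from unique solvability of the linear system for $f(x(t))$ and the injectivity of $f$.

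For existence I would split on whether $x_{0,N}=0$. If $x_{0,N}=0$, then $p(x_0)=0$ forces $x_N(t)\equiv 0$, and Lemma~\ref{special-lemma-d} shows that the first $N-1$ coordinates must solve the $B_{N-1}$ ODE with $\nu=2>0$. Theorem~\ref{ode-ex-unique-b-thm} applied in dimension $N-1$ with $\nu=2$ supplies a unique continuous solution $(y_1,\ldots,y_{N-1})$ starting from $(x_{0,1},\ldots,x_{0,N-1})\in C_{N-1}^B$ with $(y_1(t),\ldots,y_{N-1}(t))\in W_{N-1}^B$ for $t>0$; appending $x_N\equiv 0$ yields the required solution in $W_N^D$. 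If instead $x_{0,N}\ne 0$, WLOG $x_{0,N}>0$, I would copy the scaling/contradiction argument from Theorem~\ref{ode-ex-unique-b-thm}: were the $f$-image of the trajectory to stay in $\overline{f(W_N^D)}\setminus f(W_N^D)$ on some $[0,t_0]$, a discriminant-type polynomial along the trajectory would vanish identically in $t$; rescaling $t\mapsto c^{-1/2}x(ct)$ and letting $c\to\infty$ drives the starting point to $0$, where by Lemma~\ref{special-solution-D} the known solution $\sqrt{2t}\,y$ lies in $W_N^D$ for $t>0$, contradicting continuous dependence on initial conditions.

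I expect the main obstacle to be the set-up of the invariant map $f$: the additional coordinate $p$ is indispensable because the $B_N$-invariants alone do not separate $D_N$-orbits, and its conservation law (rather than a growth law like those used in the $A$- and $B$-cases) must be reconciled with the boundary stratum $\{x_N=0\}$ via Lemma~\ref{special-lemma-d}. Once $f$ is known to be a homeomorphism with the claimed smooth structure on the interior, the remaining arguments are formally parallel to the $B_N$ case.
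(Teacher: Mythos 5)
Your proposal is correct and follows essentially the same route as the paper's proof: the cascade of linear ODEs for the invariants $\tilde e_k$ with $\nu=0$ (so that $\tilde e_N$ is conserved), the case split on $x_{0,N}=0$ (reduction to the $B_{N-1}$ ODE with $\nu=2$ via Lemma~\ref{special-lemma-d} and Theorem~\ref{ode-ex-unique-b-thm}) versus $x_{0,N}\ne 0$ (the rescaling/contradiction argument anchored at the special solution $\sqrt{2t}\cdot y$ of Lemma~\ref{special-solution-D}). Your one genuine addition --- augmenting the $B_N$-invariants by the conserved product $p(x)=x_1\cdots x_N$ so that the invariant map becomes injective on all of $C_N^D$ --- is a welcome tightening rather than a different method, since the paper separates the two signs of $x_N$ only implicitly through the sign-preservation statement of Lemma~\ref{special-lemma-d}.
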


\begin{proof} The proof is similar to that of Theorem \ref{ode-ex-unique-b-thm}, and we
keep the notations from Theorem \ref{ode-ex-unique-a-thm}.
For  $x_0\in  W_N^D$, the assertion is clear.
Now let $x_0=(x_1(0),\ldots,x_N(0))\in\partial C_N^D$. We transform
 (\ref{ODE-d}) as in Theorem \ref{ode-ex-unique-b-thm}  for $\nu=0$
and obtain
that for a solution $(x(t))_{t\ge0}$ of  (\ref{ODE-d}), the function
$\tilde e(t):= (\tilde e_1^N(x(t)),\ldots,\tilde e_1^N(x(t)) )$ satisfies
\begin{equation}\label{elementary-symm-d-1}
\frac{d}{dt}\Bigl(\tilde e_k^N(x(t))\Bigr)= 
2 (N-k+1)(N-k)\tilde e_{k-1}^{N}(x(t)) \quad\quad(k\ge2)
\end{equation}
and
\begin{equation}\label{symm-eq-d-1}
\frac{d}{dt}\Bigl(\tilde e_1^{N}(x(t))\Bigr)= 2N(N-1).
\end{equation}
In particular, for $k=N$,
\begin{equation}\label{symm-eq-d-N}
\frac{d}{dt}\Bigl(\tilde e_N^{N}(x(t))\Bigr)= 0.
\end{equation}
We now consider different cases. If $x_N(0)=0$, then we obtain from (\ref{symm-eq-d-N})
 that $\tilde e_N^{N}(x(t))=0$
and thus  $x_N(t)=0$  for all $t\ge0$. If we insert the trivial component $x_N(t)=0$  for $t\ge0$ 
into our ODE
 (\ref{ODE-d}), we get an ODE of the form (\ref{ODE-b}) of type B in $N-1$ dimensions with some $\nu>0$. 
Therefore, in this case,
 Theorem \ref{ode-ex-unique-d-thm} follows from Theorem \ref{ode-ex-unique-b-thm}.
Assume now that  $x_N(0)\ne0$. Here, the theorem can be proved in the same way as Theorem \ref{ode-ex-unique-a-thm}
 where one has to use
the fact that  the solution of   (\ref{ODE-d}) with start in $0$ is given by
$x(t):=\sqrt{2t}\cdot(\sqrt y_1,\ldots,\sqrt y_{N-1},0)$. 
\end{proof}

There is also an analogue of Lemma 
\ref{lower-boundary-a} for the case D with the same proof.

Clearly, the ODE (\ref{ODE-d}) of type D is closely related with the ODE (\ref{ODE-b}) of type B for $\nu=0$ by
Lemma \ref{special-lemma-d}. In particular, we obtain with Theorem \ref{ode-ex-unique-d-thm}:

\begin{corollary}\label{ode-ex-unique-b-thm-nu-null}
Let $N\ge 2$ and $\nu=0$ for the root system $B_N$.
Then, for each starting point $x_0=(x_1(0),\ldots,x_N(0))\in C_N^B$ with $x_N(0)>0$, the ODE  (\ref{ODE-b})
has a unique solution $x$ for all $t\ge0$ in the sense of Theorem \ref{ode-ex-unique-a-thm} with 
$x(t)\in W_N^B$ for $t>0$. On the other hand, if  $x_N(0)=0$, then there is no solution  $x$ of (\ref{ODE-b})
 with $x(t)\in W_N^B$ for $t>0$.
\end{corollary}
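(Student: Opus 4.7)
The plan is to reduce the corollary directly to Theorem \ref{ode-ex-unique-d-thm} by exploiting the fact that setting $\nu=0$ makes the vector field $H_\nu$ in (\ref{ODE-b}) coincide exactly with the vector field $H_D$ in (\ref{ODE-d}). Combined with the set-theoretic relations $C_N^B\subset C_N^D$ and $W_N^B = W_N^D\cap\{x_N>0\}$ (which follow from $x_N\ge 0$ giving $x_{N-1}\ge x_N = |x_N|$), every prospective $B_N$-solution at $\nu=0$ in $W_N^B$ is automatically a $D_N$-solution in $W_N^D$, so the uniqueness in both parts of the corollary will come for free from Theorem \ref{ode-ex-unique-d-thm}.

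For the case $x_N(0)>0$, I would first invoke Theorem \ref{ode-ex-unique-d-thm} to obtain the unique continuous $x:[0,\infty)\to C_N^D$ with $x(0)=x_0$ and $x(t)\in W_N^D$ for $t>0$ that solves (\ref{ODE-d}); this is our only candidate. What must then be checked is that $x_N(t)>0$ for every $t>0$, so that $x(t)$ actually lies in $W_N^B$. By continuity, $x_N$ is positive on some maximal interval $(0,t^*)$; if $t^*<\infty$ were the first time $x_N$ vanished, pick $t_0\in(0,t^*)$, note $x(t_0)\in W_N^D$ with $x_N(t_0)>0$, and apply Lemma \ref{special-lemma-d} starting at $t_0$ to conclude $x_N(t)>0$ for all $t\ge t_0$, contradicting $x_N(t^*)=0$. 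Hence $x(t)\in W_N^B$ throughout, proving existence and uniqueness in the $B_N$-sense.

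For the case $x_N(0)=0$, I would use the transformed ODE from the proof of Theorem \ref{ode-ex-unique-d-thm}. Equation (\ref{symm-eq-d-N}) gives $\frac{d}{dt}\tilde e_N^N(x(t))=0$, so $\tilde e_N^N(x(t))=\tilde e_N^N(x_0)=x_1(0)^2\cdots x_N(0)^2=0$ for all $t\ge 0$. For $t>0$ the solution lies in $W_N^D$, forcing $x_1(t),\ldots,x_{N-1}(t)>0$, so necessarily $x_N(t)=0$. Thus the unique $D_N$-solution fails to lie in $W_N^B$, and any hypothetical $B_N$-solution in $W_N^B$ would, being a $D_N$-solution with the same initial data, coincide with it by Theorem \ref{ode-ex-unique-d-thm}, contradicting $x_N(t)>0$.

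The only mildly subtle point is the first case, where one has to rule out the $D_N$-solution crossing through $\{x_N=0\}$ even though the start $x_0$ may be on $\partial C_N^B$ (and hence on $\partial C_N^D$ if some $x_i=x_{i+1}$). The bootstrapping through an arbitrarily small $t_0>0$ where $x(t_0)\in W_N^D$ handles this cleanly; everything else is a direct transcription from the $D_N$-results.
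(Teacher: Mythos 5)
Your proposal is correct and follows essentially the route the paper intends: the paper states the corollary as an immediate consequence of Theorem \ref{ode-ex-unique-d-thm} together with Lemma \ref{special-lemma-d}, using exactly the identification of $H_\nu$ at $\nu=0$ with $H_D$ and the relation $W_N^B=W_N^D\cap\{x_N>0\}$. Your write-up merely makes explicit the details the paper leaves implicit (the bootstrap through a small $t_0>0$ to apply Lemma \ref{special-lemma-d} from an interior point, and the use of (\ref{symm-eq-d-N}) to pin $x_N\equiv 0$ in the second case), and these details are sound.
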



\begin{thebibliography}{999}

\bibitem[AKM1]{AKM1} S. Andraus, M. Katori, S. Miyashita, Interacting particles on the line 
and Dunkl intertwining operator of type $A$: Application to the freezing regime. 
\textit{J. Phys. A: Math. Theor. } 45  (2012) 395201.

\bibitem[AKM2]{AKM2} S. Andraus, M. Katori, S. Miyashita, 
Two limiting regimes of interacting Bessel processes. 
 \textit{J. Phys. A: Math. Theor. } 47  (2014) 235201.

\bibitem[AV1]{AV1} S. Andraus, M. Voit, Limit theorems
 for multivariate Bessel processes in the freezing regime. \textit{Stoch. Proc. Appl. }
 129 (2019), 4771-4790.

\bibitem[AV2]{AV2} S. Andraus, M. Voit, Central limit theorems for multivariate Bessel processes
 in the freezing regime II: The covariance matrices.
\textit{J. Approx. Theory}  246 (2019), 65-84. 

\bibitem[A]{A} J.-P. Anker.  An introduction to Dunkl theory and its analytic aspects. In: G. Filipuk et al.. Analytic, Algebraic and Geometric Aspects of Differential Equations,
Birkh{\"a}user, pp.3-58, 2017.

 
\bibitem[CDGRVY]{CGY} O. Chybiryakov, N. Demni, L. Gallardo, M. R\"osler, M. Voit,
 M. Yor,  Harmonic and stochastic analysis of Dunkl processes. Eds.:  P. Graczyk et al., Hermann, Paris 2008.

\bibitem[DV]{DV} J.F. van Diejen, L. Vinet, Calogero-Sutherland-Moser Models.
 CRM Series in Mathematical Physics, Springer, Berlin, 2000.

\bibitem[DE1]{DE1} I. Dumitriu, A. Edelman, Matrix models for beta-ensembles. \textit{J. Math. Phys.} 43 (2002),  5830-5847.

\bibitem[DE2]{DE2} I. Dumitriu, A. Edelman, Eigenvalues of Hermite and Laguerre ensembles: large beta asymptotics,
\textit{Ann. Inst. Henri Poincare (B)} 41 (2005), 1083-1099.

\bibitem[GM]{GM} P.  Graczyk, J. Malecki, Strong solutions of non-colliding particle systems.
\textit{ Electron. J. Probab.} 19 (2014), 21 pp.

\bibitem[HS]{HS} M.W. Hirsch, S. Smale, Differential Equations, Dynamical Systems, and Linear Algebra.
Academic Press, San Diego, CA, 1974.



\bibitem[R1]{R} M. R\"osler,
Generalized Hermite polynomials and the heat equation for Dunkl operators.
\textit{Comm. Math. Phys.} 192 (1998),  519-542.


\bibitem[R2]{R2} M. R\"osler, Dunkl operators: Theory and applications.
In: Orthogonal polynomials and special functions, Leuven 2002, \textit{Lecture Notes in Math.} 1817 (2003), 93--135.

\bibitem[RV]{RV} M. R\"osler, M. Voit, Markov processes related with Dunkl operators.
\textit{Adv. Appl. Math.}  21 (1998) 575-643.

\bibitem[Sch]{Sch} B. Schapira,
The Heckman-Opdam Markov processes.
 \textit{Probab. Theory Rel. Fields} 138 (2007), 
 495-519.

\bibitem[S]{S} G. Szeg{\"o}, Orthogonal Polynomials. 
Colloquium Publications (American Mathematical Society), Providence, 1939.


\bibitem[V]{V} M. Voit, Central limit theorems 
 for multivariate Bessel processes in the freezing regime.\textit{ J. Approx. Theory } 239 (2019), 210--231.
 
 \bibitem[VW]{VW} M. Voit, J.H.C. Woerner,
 Functional central limit theorems
 for multivariate Bessel processes in the freezing regime. Preprint, submitted 2019. 

\end{thebibliography}
\end{document}